\newtheorem{theo}{Theorem}[section]
\newtheorem{lemm}[theo]{Lemma}
\newtheorem{prop}[theo]{Proposition}
\theoremstyle{definition}
\newtheorem{defi}[theo]{Definition}
\newtheorem{rema}[theo]{Remark}
\newcommand{\CC}{\mathbb{C}}
\newcommand{\QQ}{\mathbb{Q}}
\newcommand{\RR}{\mathbb{R}}
\newcommand{\ZZ}{\mathbb{Z}}
\newcommand{\Aa}{\mathcal{A}}
\newcommand{\Bb}{\mathcal{B}}
\newcommand{\Cc}{\mathcal{C}}
\newcommand{\Dd}{\mathcal{D}}
\newcommand{\Ll}{\mathcal{L}}
\newcommand{\Mm}{\mathcal{M}}
\newcommand{\Ker}{\mathrm{Ker}\,}
\newcommand{\ov}{\overline}
\newcommand{\kk}{\mathbf{k}}
\newcommand{\del}{\partial}
\newcommand{\delb}{{\overline\partial}}
\newcommand{\lra}{\longrightarrow}
\newcommand{\dR}{\mathrm{dR}}
\author{Joana Cirici}
\address[J. Cirici]{Departament de Matemàtiques i Informàtica, Universitat de Barcelona | Centre de Recerca Matemàtica.
 Barcelona, Spain}
\email{jcirici@ub.edu}
\title{On the real homotopy type
of compact complex surfaces}
\thanks{Funding:
Govern de Catalunya (2021-SGR-00697 and Serra H\'{u}nter Program); Spanish State Research Agency (CEX2020-001084-M, PID2020-117971GB-C22, PID2024-155646NB-I00 and EUR2023-143450).
}
\begin{document}

\maketitle

\begin{abstract}
We show that on a compact complex surface all Massey products of cohomology classes in degree one vanish beyond length three. Dually, the real Malcev completion of the fundamental group is homogeneously presented by quadratic and cubic relations.
In the non-Kähler case, we give an explicit presentation, which is determined by the first Betti number of the surface.
\end{abstract}

\section{Introduction}
A compact complex surface is Kähler if and only if its first Betti number \(b^1\) is even and, in this case, the manifold is formal in the sense of rational homotopy theory \cite{DGMS}. In particular, Massey products vanish.
Dually, one can describe the real Malcev completion of the fundamental group of a compact Kähler manifold as a  quadratically presented Lie algebra: its generators are given by the dual of the first de Rham cohomology vector space, and the defining quadratic relations are determined by the cup product on cohomology.
In the non-Kähler case, this no longer holds; however, strong homotopy-theoretic restrictions still apply for surfaces. We prove the following:

\theoremstyle{theorem}
\newtheorem*{maint}{\normalfont\bfseries Main Theorem}\label{mainintro}
\begin{maint}
Let \(S\) be a compact complex non-Kähler surface. Then:
\begin{enumerate}
    \item There are no Massey products from \(H^1(S)\) to \(H^2(S)\) of length greater than three.
    \item The real Malcev completion of \(\pi_1(S)\) is determined by the bigraded nilpotent Lie algebra
    \[\bigl(\pi_1(S)/\Gamma_4 \bigr) \otimes \mathbb{R},\text{ where } \Gamma_2 = [\pi_1(S), \pi_1(S)]\text{ and }\Gamma_{i+1} = [\pi_1(S), \Gamma_i]\] is the lower central series.
    \item
    The real Malcev Lie algebra
    admits a presentation by generators $\{X_i,Y_i\}_{i=1}^k$ and $Z$, subject to the relations
 \[[Z,X_i]=0\quad,\quad[Z,Y_i]=0\quad,\quad \sum_{j=1}^k [[X_j,Y_j],X_i]=0\quad\text{ and }\quad\sum_{j=1}^k [[X_j,Y_j],Y_i]=0,\]
 for all $1\leq i\leq k$, where $k={1\over 2}(b^1-1)$.
\end{enumerate}
\end{maint}
In particular, for non-Kähler surfaces, the real Malcev completion of the fundamental group is determined by the first Betti number. This contrasts with the case of Kähler surfaces, where one must also take into account the structure of the cup product.

The theorem is
sharp in the following two directions. First, there exist compact complex surfaces with non-trivial triple Massey products; the Kodaira–Thurston manifold endowed with any left-invariant complex structure is such an example.
Second, the theorem does not extend to the neighbouring categories:
for instance, the filiform nilmanifold is a 4-dimensional almost complex manifold admitting symplectic but no integrable complex structures, and it exhibits a non-trivial quadruple Massey product. More generally, every finitely presented group arises as the fundamental grup of
a closed almost complex 4-manifold \cite{Kot}, of a symplectic 4-manifold \cite{Gompf}, and
of a complex projective surface with simple normal crossing singularities \cite{KaKo}. In particular, the theorem fails as soon as we relax the integrability condition or allow for singularities in the projective case.
Beyond the four-dimensional setting, it is also known that every finitely presented group can be realized in the category of compact complex threefolds, and more generally, in that of higher-dimensional complex manifolds (see, for example, \cite{ABCKT} for a proof). This once again evidences that complex surfaces are really special.

 The study of fundamental groups of complex manifolds has traditionally centered on the Kähler case, through the theory of \emph{Kähler groups}. These form a highly restrictive class among finitely presented groups, subject to strong constraints arising from Hodge theory and related analytic tools. In complex dimension two, there is a substantial body of work on fundamental groups of complex surfaces—both Kähler and non‑Kähler, with elliptic surfaces providing a particularly rich source of results (see \cite{ABCKT}, \cite{FrMo}). In particular,
 due to Kodaira's classification of surfaces, their fundamental groups are known to be quite restrictive.
 However, while rational homotopy techniques have been extensively exploited in the Kähler setting, comparable techniques appear to be largely absent in the study of non‑Kähler surfaces. We hope that our results will stimulate further progress on the study of fundamental groups, beyond the real Malcev completion, of compact non-Kähler surfaces.

 A key framework motivating this work is that of smooth algebraic varieties, whose rational homotopy models are enriched in \emph{mixed Hodge structures}, as shown by Morgan \cite{Morgan}. The presence of mixed Hodge structures at the cochain level and the restricted weights in cohomology imply that, for a smooth complex algebraic variety $X$, there are no $k$-tuple Massey products from $H^1(X)$ to $H^2(X)$ for $k > 4$. Dually, the real Malcev completion of $\pi_1(X)$ is determined by the bigraded nilpotent Lie algebra
$
\bigl(\pi_1(X)/\Gamma_5\bigr) \otimes \mathbb{R}$.
This provides strong homotopical restrictions on which finitely presented groups can occur as fundamental groups of smooth complex varieties.

Our strategy for proving the Main Theorem is to construct a 1-model of the de Rham algebra in the sense of rational homotopy theory, together with a weight decomposition. This commutative differential graded algebra encodes the real homotopy type of the manifold in low degrees. When $b^1=1$ this model is just the free algebra in one generator and so not so exciting. In this case we have 1-formality, Massey products vanish in all lengths $\geq 3$ and the real Malcev Lie algebra associated to the fundamental group is just the abelian one-dimensional Lie algebra.

When $b^1>1$, our model is described as follows.
Denote by $H_{BC}^{\leq 1}$ the sub-algebra of real Bott-Chern cohomology generated in degree $\leq 1$.
A 1-model for $\Aa_{\dR}(S)$ is then given by:
\[\Mm_\RR=H_{BC}^{\leq 1}\otimes\Lambda(\gamma,\gamma^c),\text{ with }d\gamma=0\text{ and }d\gamma^c=[\beta]_{BC}.\]
Here $[\beta]_{BC}$ denotes a non-trivial real Bott-Chern class generating the
the 1-dimensional kernel of the map $H^{1,1}_{BC}\to H^{1,1}_\delb$ induced by the identity.
Setting the Bott-Chern generators to be of weight $1$ and $\gamma$, $\gamma^c$ to be of weight 2, we obtain
a 1-model with a weight decomposition or, more generally, with a mixed Hodge structure with very restricted weights, from which the assertions of the Main Theorem may be deduced. Note our results are stated over $\RR$, and it remains an open question whether some of our structure results also hold $\QQ$.

Some recent results related to the present approach are the models for Vaisman and Sasakian manifolds \cite{Tievsky}, \cite{CaBe} and those for transverse Kähler structures \cite{Transverse}.
Let us note that,
according to Kodaira's classification, non-Kähler surfaces fall into two broad classes: class \( \mathrm{VII} \) surfaces, all with \( b^1 = 1 \), and elliptic surfaces, for which \( b^1 \geq 1 \).
Belgun \cite{Belgun} showed that minimal non-Kähler elliptic surfaces admit Vaisman metrics. Consequently, by the above cited works, they possess a concise model based on basic cohomology. Our homotopy-theoretic conclusions could potentially be derived from this approach, as the 1-minimal model of a minimal surface is insensitive to blow-ups at points. The approach we describe in this note is more direct and concrete, as it specifically addresses the case of complex surfaces.

\subsection*{Acknowledgments} I am grateful to Liviu Ornea and Misha Verbitsky for the discussions related to Lemma \ref{omegainj}.
I owe Geoffroy Horel the strategy to deduce the presentation of $(3)$ in the main Theorem, from the 1-minimal model.
Thanks also to Jaume Amorós, Jonas Stelzig, Scott Wilson and the Critical Math for the useful discussions and comments.

\section{Cohomology of compact non-Kähler surfaces}\label{seccohom}

Denote by $\Aa:=\Aa_{\dR}^*\otimes\CC=\bigoplus \Aa^{*,*}$ the complex de Rham algebra of a compact complex surface $S$. Its differential splits into components $d=\del+\delb$, where $\delb$ is the Dolbeault operator, of bidegree $(0,1)$ and $\del$ is its complex conjugate. Let
$H^*_d$ denote the complex de Rham cohomology of $S$ and
$H^{*,*}_\delb$ its Dolbeault cohomology.
In the compact case, both de Rham and Dolbeault cohomologies are finite-dimensional and we let $b^k:=\dim H_d^k$ and $h^{p,q}:=\dim H_\delb^{p,q}$.
The Frölicher spectral sequence of any compact complex surface degenerates at the first page, and so
\[H^k_d\cong\bigoplus_{p+q=k}H^{p,q}_\delb  \quad\text{ and }\quad b^k=\sum_{p+q=k} h^{p,q}.\]
Denote by $\Omega^{p}:=\Aa^{p,0}\cap\Ker\delb$ the space of holomorphic $p$-forms.
On a compact complex surface, every holomorphic $p$-form is closed (see for instance Lemma IV.2.1 of \cite{Barth}) and so for all $p\geq 0$ we have
\[H^{p,0}_\delb=\Omega^p=\Ker d\cap \Aa^{p,0}.\]
This gives a well-defined map 
\[H^{1,0}_\delb\oplus \overline{H^{1,0}_\delb}\lra H^1\]
which is in fact injective. In the Kähler case, this map is an isomorphism. In the non-Kähler case,
for which $h^{0,1}=h^{1,0}+1$,
there is a closed form $\gamma$ such that
\[H^1_d\cong H^{1,0}_\delb\oplus \ov{H^{1,0}_\delb}\oplus\langle[\gamma]\rangle.\]
We call the class $[\gamma]$ the \textit{purity defect}, in the sense that it prevents the cohomology of a compact non-Kähler surface to carry a pure Hodge structure.

We will also consider
Bott-Chern and Aeppli cohomologies, defined respectively as
\[H^{p,q}_{BC}:={{\Ker\del\cap\Ker\delb\cap\Aa^{p,q}}\over{\del\delb(\Aa^{p-1,q-1})}}
\quad\text{ and }\quad H^{p,q}_{A}:={{\Ker\del\delb\cap\Aa^{p,q}}\over{\del(\Aa^{p-1,q})+\delb(\Aa^{p,q-1})}}.
\]
One easily verifies that 
\[H^{1,0}_{BC}\cong H^{1,0}_\delb=\Omega^1\text{ and }H^{0,1}_{BC}\cong \overline{{H}^{1,0}_\delb}=\ov\Omega^1.\]
Note the identity induces a map $H^{1,1}_{BC}\to H^{1,1}_\delb$. Also, there is a map
$\del:H^{0,1}_\delb\lra H^{1,1}_{BC}$
induced by the component $\del$ of the exterior differential.
The following result is essentially due to Lamari \cite{La}. We refer to \cite{Verbook} or \cite{Verb}
for a proof of this precise statement.

\begin{lemm}\label{Verblemma}
For any  compact non-Kähler surface there is an exact real form $\beta\in \Aa^{1,1}$ such that $[\beta]_{BC}\neq 0$ is non-trivial in Bott-Chern cohomology and:
\begin{enumerate}
 \item The class $[\beta]_{BC}$ is in the image of the map $\del:H^{1,0}_\delb\lra H^{1,1}_{BC}$.
 \item The map $H^{1,1}_{BC}\to H^{1,1}_\delb$ is surjective and its kernel is generated by $[\beta]_{BC}$.
 This gives a short exact sequence 
 \[0\to \langle [\beta]_{BC}\rangle\to H^{1,1}_{BC}\to H^{1,1}_\delb\to 0.\]
\end{enumerate}
\end{lemm}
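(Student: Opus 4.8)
The plan is to extract everything from the \emph{purity defect}. Choosing $\gamma$ to be real and writing $\gamma=\gamma^{1,0}+\gamma^{0,1}$ with $\gamma^{0,1}=\ov{\gamma^{1,0}}$, the three bidegree components of $d\gamma=0$ read $\del\gamma^{1,0}=0$, $\delb\gamma^{0,1}=0$ and $\del\gamma^{0,1}=-\delb\gamma^{1,0}$. The last identity gives $\ov{\del\gamma^{0,1}}=\delb\gamma^{1,0}=-\del\gamma^{0,1}$, so $\del\gamma^{0,1}$ is purely imaginary and
\[\beta:=2i\,\del\gamma^{0,1}=d\bigl(i\gamma^{0,1}-i\gamma^{1,0}\bigr)\]
is a real, $d$-exact form of type $(1,1)$. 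Being exact and of pure type it is automatically $\del$- and $\delb$-closed, so $[\beta]_{BC}$ is defined, and by construction $[\beta]_{BC}=2i\,\del[\gamma^{0,1}]$ lies in the image of the map $\del\colon H^{0,1}_\delb\lra H^{1,1}_{BC}$ introduced above, which is the content of $(1)$.

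The first fact I would isolate is that $[\gamma^{0,1}]$ does \emph{not} lie in $\ov{\Omega^1}\subset H^{0,1}_\delb$. Indeed, under the Frölicher isomorphism $H^1_d\cong H^{1,0}_\delb\oplus H^{0,1}_\delb$ the image of the injective map $H^{1,0}_\delb\oplus\ov{H^{1,0}_\delb}\hookrightarrow H^1_d$ of Section~\ref{seccohom} is exactly $H^{1,0}_\delb\oplus\ov{\Omega^1}$, while the $(0,1)$-component of $[\gamma]$ is $[\gamma^{0,1}]$; were $[\gamma^{0,1}]\in\ov{\Omega^1}$, the class $[\gamma]$ would lie in that image, contradicting that it is the purity defect. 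This at once yields the nontriviality of $[\beta]_{BC}$: if $\beta=\del\delb u$ for some function $u$, then $\eta:=2i\gamma^{0,1}-\delb u$ satisfies $\del\eta=0=\delb\eta$, so $\ov\eta$ is a holomorphic $1$-form and $[\gamma^{0,1}]=[\eta]/2i\in\ov{\Omega^1}$, a contradiction. Since moreover $\del$ vanishes on the anti-holomorphic subspace $\ov{\Omega^1}$ and $H^{0,1}_\delb=\ov{\Omega^1}\oplus\langle[\gamma^{0,1}]\rangle$ (a direct sum by the dimension count $h^{0,1}=h^{1,0}+1$), the image of $\del$ is precisely the line $\langle[\beta]_{BC}\rangle$.

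For $(2)$, first note that $[\beta]_{BC}$ lies in the kernel of the comparison map $\iota\colon H^{1,1}_{BC}\to H^{1,1}_\delb$, because $\beta=-2i\,\delb\gamma^{1,0}$ is $\delb$-exact. I would then organize the argument around the complex
\[H^{0,1}_\delb\xrightarrow{\ \del\ }H^{1,1}_{BC}\xrightarrow{\ \iota\ }H^{1,1}_\delb\lra 0,\]
for which the previous paragraph already gives $\Img\del=\langle[\beta]_{BC}\rangle\subseteq\Ker\iota$. It remains to prove that $\iota$ is surjective and that $\Ker\iota$ is one-dimensional; by rank--nullity both reduce to the single numerical identity $\dim H^{1,1}_{BC}=\dim H^{1,1}_\delb+1$, after which $\Ker\iota=\langle[\beta]_{BC}\rangle$ and the short exact sequence of the statement drops out.

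The hard part will be the surjectivity of $\iota$ (equivalently the dimension identity). Given a $\delb$-closed $\omega\in\Aa^{1,1}$, modifying it within its Dolbeault class by $\delb\mu$ with $\mu\in\Aa^{1,0}$ changes $\del\omega$ by $\del\delb\mu$, so a Bott--Chern-closed representative in the class $[\omega]_\delb$ exists if and only if the obstruction $[\del\omega]_{BC}\in H^{2,1}_{BC}$ vanishes. The degeneration of the Frölicher spectral sequence at $E_1$, together with its complex conjugate, forces $\del\omega$ to be simultaneously $\delb$-exact and $\del$-exact; the obstacle is that on a non-Kähler surface the $\del\delb$-lemma fails, so this cannot be promoted to $\del\delb$-exactness by formal means. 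Resolving exactly this point---via the Bott--Chern--Aeppli duality $H^{2,1}_{BC}\cong\bigl(H^{0,1}_A\bigr)^*$ together with Stokes' theorem, or via the explicit computation of the Bott--Chern numbers of compact complex surfaces---is the crux, and it is precisely where the reference to Lamari and \cite{Verbook,Verb} enters. I expect this surjectivity step to be the only genuinely non-formal ingredient; everything else is bookkeeping with the purity defect and the Frölicher decomposition.
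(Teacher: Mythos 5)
The paper itself does not prove this lemma: it attributes the statement to Lamari and refers the reader to \cite{Verbook}, \cite{Verb}, so there is no internal proof to match yours against. That said, the first half of your argument is correct and genuinely self-contained: constructing $\beta:=2\mathbf{i}\,\del\gamma^{0,1}$ from the components of a real closed representative of the purity defect, checking it is a real exact $(1,1)$-form defining a Bott--Chern class in the image of $\del\colon H^{0,1}_\delb\to H^{1,1}_{BC}$ (note the statement's ``$H^{1,0}_\delb$'' is a typo for $H^{0,1}_\delb$, as the paper's later use of the lemma confirms), and deducing $[\beta]_{BC}\neq 0$ from $[\gamma^{0,1}]\notin\ov\Omega^1$ all use only facts established before the lemma ($h^{0,1}=h^{1,0}+1$, closedness of holomorphic forms, $\del\delb f=0\Rightarrow f$ constant). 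The same goes for the inclusion $\langle[\beta]_{BC}\rangle\subseteq\Ker\bigl(H^{1,1}_{BC}\to H^{1,1}_\delb\bigr)$. Up to this point you have actually supplied more detail than the paper does.

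Part (2), however, is not proved, for two concrete reasons. First, the rank--nullity reduction is logically off: the identity $\dim H^{1,1}_{BC}=h^{1,1}+1$, combined with $\dim\Ker\geq 1$, only yields $\dim\Img\leq h^{1,1}$; it makes surjectivity \emph{equivalent} to one-dimensionality of the kernel but proves neither, so even granting the dimension identity (which you also do not establish) you would still have to prove one of the two statements outright. Second, your surjectivity argument stops at identifying the obstruction: you correctly reduce lifting a Dolbeault class $[\omega]_\delb$ to the vanishing of $[\del\omega]_{BC}\in H^{2,1}_{BC}$, observe that Frölicher degeneration gives $\delb$-exactness of $\del\omega$ but not $\del\delb$-exactness, and then defer the resolution to ``duality and Stokes'' or to \cite{La}, \cite{Verbook}, \cite{Verb}. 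That deferred step is exactly the non-formal analytic content of the lemma --- it is where Lamari's current-theoretic/duality input is actually used --- so as written the short exact sequence in (2) is asserted rather than proved. The proposal is a sound and useful skeleton, but it has the same hole that the paper fills by citation.
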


We may describe the purity defect as follows.
By (1) of Lemma \ref{Verblemma}, there is a $\delb$-closed form $\ov\eta\in \Aa^{0,1}$ such that $[\beta]_{BC}=[\del\ov\eta]_{BC}$. Therefore we may write
\[\beta=\del\ov\eta+\del\delb f\]
for some smooth function $f$. Letting $\ov\theta:=\ov\eta+\delb f$ we  obviously have $d\ov\theta=\del\ov\theta=\beta$ and, since $\beta$ is real, we have that the complex conjugate $\theta$ of $\ov\theta$ satisfies $d\theta=\delb\theta=\beta$. The form
\[\gamma:={1\over 2\mathbf{i}}(\theta-\ov\theta)\]
is real, closed and satisfies $d^c\gamma=\beta$, where $d^c:=\mathbf{i}(\delb-\del)$. It thus generates the purity defect.

 With the above choices for $\theta$, $\ov\theta$ and $\beta$, we obtain a decomposition of the double complex of a complex non-Kähler surface into indecomposable double complexes as depicted below.
 
\begin{center}
\begin{tikzpicture}
\draw[very thin, gray]  (0,0) -- (0,7.5);
\draw[very thin, gray]  (0,0) -- (7.5,0);
\draw[very thin, gray]  (0,7.5) -- (7.5,7.5);
\draw[very thin, gray]  (7.5,0) -- (7.5,7.5);
\draw[very thin, gray]  (0,2.5) -- (7.5,2.5);
\draw[very thin, gray]  (2.5,0) -- (2.5,7.5);
\draw[very thin, gray]  (0,5) -- (7.5,5);
\draw[very thin, gray]  (5,0) -- (5,7.5);

\draw[thick] (2,2) rectangle (3,3);
\draw[thick] (4.5,4.5) rectangle (5.5,5.5);
\draw[thick] (4.5,2) rectangle (5.5,3);
\draw[thick] (2,4.5) rectangle (3,5.5);

\draw[thick] (1.25,3.375) -- (3.375,3.375);
\draw[thick] (3.375,1.25) -- (3.375,3.375);

\draw[thick] (4.125,4.125) -- (6.25,4.125);
\draw[thick] (4.125,4.125) -- (4.125,6.25);

\draw (1.25,3.375) node[left] {\small{$\langle\ov\theta\rangle$}};
\draw (3.375,1.25) node[below] {\small{$\langle\theta\rangle$}};
\draw (3.3,3.45) node[right] {\small{$\langle\beta\rangle$}};
\draw (3.65,3.85) node[right] {\small{$\langle\tau\rangle$}};
\draw (4.125,6.25) node[above] {\small{$\langle\delb \tau\rangle$}};
\draw (6.25,4.125) node[right] {\small{$\langle\del \tau\rangle$}};


\draw (6.875,6.875) node {\small{$\bullet$}};
\draw (6.875,6.875) node [below] {\small{$1$}};

\draw (.625,.625) node {\small{$\bullet$}};
\draw (.625,.625) node [below] {\small{$1$}};

\draw (.625,6.875) node {\small{$\bullet$}};
\draw (.625,6.875) node [below] {\small{$h^{20}$}};

\draw (6.25,.625) node {\small{$\bullet$}};
\draw (6.25,.625) node [below]{\small{$h^{20}$}};

\draw (6.875,6.875) node {\small{$\bullet$}};

\draw (4.375,.625) node {\small{$\bullet$}};
\draw (4.375,.625) node[below] {\small{$h^{10}$}};

\draw (.625,4.375) node {\small{$\bullet$}};
\draw (.625,4.375) node [below]{\small{$h^{10}$}};

\draw (3.5,4) node {\small{$\bullet$}};
\draw (3.5,4) node[left] {\small{$h^{11}$}};

\draw (3.125,6.875) node {\small{$\bullet$}};
\draw (3.125,6.875) node[below] {\small{$h^{10}$}};

\draw (6.875,3.125) node {\small{$\bullet$}};
\draw (6.875,3.125) node [below] {\small{$h^{10}$}};
\end{tikzpicture}
\end{center}

In the diagram, \textit{dots} represent vector spaces generated by forms whose differential vanishes and which are not in the image of $\del$ nor of $\delb$. The number of dots in each bidegree is indicated. All numbers are determined by the topology of $S$: denote by $\sigma=b^+-b^-$ the signature of the surface.
Then
\[h^{10}={1\over 2}(b^1-1),\,h^{11}=b^-\text{ and }h^{20}=b^+/2.\]
 A \textit{square} represents a vector space generated by a form $\alpha$ in its lower-left corner with $\del\delb \alpha\neq 0$. There are infinitely-many such squares in each bidegree. The two open corners represent the spaces generated by  $[\beta]_{BC}$ and a non-trivial Aeppli class $[\tau]_A$ respectively.
 There is only one of each and, in the next section, we will show that, if $\Omega^1\neq 0$, then one can choose $\beta=\mathbf{i}(x\wedge \ov x)$, where $x\in\Omega^1$ and $\tau=\theta\wedge\ov\theta$.

The above diagram in bidegree $(0,0)$ summarizes the well-known fact that, for a compact connected complex manifold,
any smooth complex-valued function $f$ satisfying $\del\delb f=0$ must be constant (see for instance Lemma IV.2.2 of \cite{Barth}).
Likewise, in bidegree $(1,0)$ the diagram tells us that if $z\in\Aa^{1,0}$ satisfies $\del\delb z=0$, then $z=x+a\theta+\del f$ where $x\in\Omega^1$, $a\in\CC$ and $f\in\Cc^\infty$.
Other bidegrees in the diagram should be interpreted similarly. We refer to \cite{Stelzig} for a proof of the fact that any bicomplex admits such a decomposition, and to \cite{SW} for some specific examples, including that of complex surfaces.

\begin{rema}As shown in \cite{DGMS}, the double complex decomposition associated to any compact Kähler manifolds consists only in dots and squares, and this is equivalent to the $\del\delb$-condition.
 In our case, we have two extra $\mathrm{L}$-shaped summands, and so the above diagram can be thought as a perturbation of the $\del\delb$-condition that is satisfied for non-Kähler surfaces. In \cite{SW}, this is called the $\del\delb+3$-condition, since shapes with 3 vertices are allowed. Aside from complex surfaces, there are other families of complex manifolds satisfying this condition, such as Vaisman manifolds.
\end{rema}

\section{Multiplicative structure}\label{secmulti}

We collect some multiplicative properties of the differential forms on a compact non-Kähler surface.
First, we have (c.f. Section 25.2.4 of \cite{Verbook} and Lemma 6.3 of \cite{La}):

\begin{lemm}\label{Verbmulti}
Let $x,y\in\Omega^1$ be holomorphic $1$-forms. Then:
\begin{enumerate}
 \item The products $x\wedge y$ and $\ov{x}\wedge \ov{y}$ vanish and the product $x\wedge\ov y$ is $d$-exact.
 \item There is a smooth function $f$ and a constant $c\in\CC$ such that
  \[x\wedge \ov y=\del\delb f + c\beta,\text{ with }c\in\CC,\]
  where $\beta$ is a real $d$-exact representative of the non-trivial Bott-Chern class of Lemma \ref{Verblemma}.
\item If $x=y\neq 0$ then, in the above formula, we have $c\neq 0$.
\end{enumerate}
\end{lemm}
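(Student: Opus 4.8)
The plan is to reduce all three assertions to a single fact: that the semipositive form $\tfrac{i}{2}x\wedge\ov x$ is $d$-exact. Throughout I use that holomorphic $1$-forms on a surface are $d$-closed, so $x,y,\ov x,\ov y$ are all closed and the products below are $d$-closed. First I would set up a reduction to the diagonal. Consider $B(x,y):=[x\wedge\ov y]_d\in H^2(S;\CC)$, the cup product $[x]_d\cup[\ov y]_d$, which by type lands in $H^{1,1}$. It is linear in $x$ and conjugate-linear in $y$, and since $\ov{x\wedge\ov y}=\ov x\wedge y=-y\wedge\ov x$ we get $\ov{B(x,y)}=-B(y,x)$, i.e. $B$ is anti-Hermitian. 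A standard polarization identity then forces $B\equiv 0$ as soon as $B(x,x)=0$ for every $x$, so everything reduces to the diagonal statement $[x\wedge\ov x]_d=0$.

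The crux, and the step where non-Kählerness must enter, is this diagonal vanishing. Write $\phi:=\tfrac{i}{2}x\wedge\ov x$, a real $d$-closed $(1,1)$-form. Since $x\wedge x=0$ we have $[\phi]_d\cup[\phi]_d=\int_S\phi\wedge\phi=0$, so $[\phi]_d$ is isotropic for the intersection form on $H^2(S;\RR)$. Here I would invoke the hypothesis $b^+=2h^{2,0}$: in $H^2(S;\RR)\otimes\CC=H^{2,0}\oplus H^{1,1}\oplus H^{0,2}$ the real classes coming from $H^{2,0}\oplus H^{0,2}$ are orthogonal to the real $(1,1)$-classes for type reasons and form a positive-definite subspace, because $\int_S\omega\wedge\ov\omega>0$ for a nonzero holomorphic $2$-form $\omega$; their dimension $2h^{2,0}=b^+$ already accounts for the whole positive index of the intersection form. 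Hence the real $(1,1)$-classes are negative-definite, and an isotropic vector there must vanish. Thus $[\phi]_d=0$, so $x\wedge\ov x$ is $d$-exact and, by degeneration of the Frölicher spectral sequence, its Dolbeault class vanishes as well. On a complex torus this fails, which is precisely why the non-Kähler input $b^+=2h^{2,0}$ is indispensable.

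Granting the diagonal case, the rest is bookkeeping. Polarization gives $[x\wedge\ov y]_d=0$ for all $x,y$, so the $d$-closed $(1,1)$-form $x\wedge\ov y$ is $d$-exact, which is the last assertion of $(1)$. Its Bott–Chern class then maps to $0$ under $H^{1,1}_{BC}\to H^{1,1}_\delb\hookrightarrow H^2_d$, so by Lemma \ref{Verblemma} it lies in $\langle[\beta]_{BC}\rangle$; writing $x\wedge\ov y-c\beta=\del\delb f$ gives $(2)$. To obtain $x\wedge y=0$ I would compute
\[\int_S (x\wedge y)\wedge\ov{(x\wedge y)}=-\int_S (x\wedge\ov x)\wedge(y\wedge\ov y),\]
substitute the decompositions $x\wedge\ov x=c_x\beta+\del\delb f_x$ and $y\wedge\ov y=c_y\beta+\del\delb f_y$, and note that every resulting term integrates to zero by Stokes, using that $\beta$ is $d$-exact and closed (so $\int_S\beta\wedge\beta=0$ and $\int_S\beta\wedge\del\delb f=0$) and that $\int_S\del\delb f_x\wedge\del\delb f_y=0$. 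As the left-hand integrand is a nonnegative top form, this yields $x\wedge y=0$; conjugating gives $\ov x\wedge\ov y=0$.

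Finally, for $(3)$ I would fix a Gauduchon metric $\omega_g$ (with $\del\delb\omega_g=0$), which exists on any compact complex surface. If $c=0$ in $x\wedge\ov x=\del\delb f+c\beta$ with $x\neq 0$, then integration by parts gives $\int_S\tfrac{i}{2}\del\delb f\wedge\omega_g=\int_S\tfrac{i}{2}f\,\del\delb\omega_g=0$, while $\int_S\tfrac{i}{2}x\wedge\ov x\wedge\omega_g>0$ since $\tfrac{i}{2}x\wedge\ov x$ is a nonnegative $(1,1)$-form, positive wherever $x\neq0$; this contradiction forces $c\neq0$. The one genuinely nontrivial point is the negative-definiteness argument of the second paragraph; the remaining steps are formal consequences of it together with Lemma \ref{Verblemma} and Stokes' theorem.
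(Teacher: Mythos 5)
Your proof is correct and follows essentially the same route as the paper: the key step in both is that the intersection form is negative definite on real $(1,1)$-classes of a non-K\"ahler surface, which kills the isotropic class $[x\wedge\ov x]_d$, after which Stokes gives $x\wedge y=0$, Lemma \ref{Verblemma} gives the decomposition in $(2)$, and the Gauduchon metric gives $(3)$. The only differences are cosmetic: you sketch the negative-definiteness from the signature identity $b^+=2h^{2,0}$ where the paper cites it (Barth et al., Theorem IV.2.14), and you handle the off-diagonal case by sesquilinear polarization where the paper says it "follows analogously".
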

\begin{proof}
The first statement is proven in Proposition 2.12 of \cite{Verbook}. We give a proof for completeness.
Note first that, unless $x\wedge y=0$, we have
\[\int_S x\wedge y\wedge \ov x\wedge \ov y>0.\]
Also, the intersection form on $H^{1,1}_d$ is negative-definite in the non-Kähler case (see \cite{Barth}, Theorem IV.2.14), and so any class $[\mu]\in H^{1,1}_d$ with
\[\int_S \mu\wedge \ov \mu=0\]
is necessarily trivial.
Now, the form $\mu:=x\wedge \ov x$ is closed and so it defines a cohomology class in $H^{1,1}_d$ which obviously satisfies $\mu\wedge\ov \mu=0$. Therefore $[\mu]$ is the trivial cohomology class and so $\mu$ is exact. In particular,
 $x\wedge \ov x$ and $y\wedge \ov y$ are exact and so the first integral above vanishes by Stokes. This proves $x\wedge y=0$.
The proof that
 $x\wedge \ov y$ is exact follows analogously. This proves (1).

Statement (2) follows directly from (1) and the double complex decomposition.
 Let us prove (3). By (2) we may write
 \[x\wedge \ov x=\del\delb f+c\beta\]
 for some funcion $f$ and constant $c$.
 Assume that $c=0$. Let $w_g$ be the $(1,1)$-form associated to a Gauduchon metric (so it satisfies $\del\delb w_g=0$). By Stokes' Theorem, we have
 \[\int_S x\wedge \ov x\wedge w_g=\int_S \del\delb f\wedge w_g=\int_Sf\wedge \del\delb w_g=0.\]
 Note however that $x\wedge \ov x$ is semi-positive and, since $w_g$ is a positive definite Hermitian form, we have 
 \[\int_S x\wedge\ov x\wedge w_g>0.\]
 This is a contradiction, and so we must have $c\neq 0$.
\end{proof}

Recall from the previous section that
$H^1_d\cong \Omega^1\oplus\ov\Omega^1\oplus [\gamma]$,
where $[\gamma]$ denotes the purity defect, and we may choose a representative
\[\gamma={1\over 2i}(\theta-\ov\theta), \text{ with }d\theta=d\ov\theta=\beta\,;\, \theta\in\Aa^{1,0}.\]
where  $\beta\in \Aa^{1,1}$ is the exact real form of Lemma \ref{Verblemma}.
The following property is proven in \cite{Verbook}, \cite{Verb} for complex surfaces with no complex curves. From Kodaira's classification, it is true a posteriori in general. A direct proof would be desirable.

\begin{lemm}\label{omegainj}The products $x\wedge \theta$ and $x\wedge \ov\theta$ are non-zero for any non-zero holomorphic $1$-form $x$.
 \end{lemm}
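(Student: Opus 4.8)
The plan is to reduce both nonvanishing statements to a single integral, to recognise that integral as a Bott--Chern--Aeppli pairing, and then to argue that the pairing is nonzero.

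\emph{Step 1: reduction to one integral.} Note that $x\wedge\theta\in\Aa^{2,0}$ and $x\wedge\ov\theta\in\Aa^{1,1}$ are both \emph{simple} (decomposable) forms, so their pointwise norms are detected by wedging with the complex conjugate. Indeed $\int_S(x\wedge\theta)\wedge\ov{(x\wedge\theta)}\geq 0$ and $\int_S(x\wedge\ov\theta)\wedge\ov{(x\wedge\ov\theta)}\leq 0$, each with equality if and only if the corresponding product vanishes. Reordering the four factors in each integrand, both are equal up to sign to $x\wedge\ov x\wedge\theta\wedge\ov\theta$, so it suffices to prove that
\[I:=\int_S x\wedge\ov x\wedge\theta\wedge\ov\theta\neq 0.\]
Since $d\theta=d\ov\theta=\beta$, this single integral controls both products at once; in particular one gets the clean identity $\|x\wedge\theta\|_{L^2}=\|x\wedge\ov\theta\|_{L^2}$, so the two products are nonzero simultaneously.

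\emph{Step 2: reformulation as a pairing.} By Lemma \ref{Verbmulti}(3) I may write $x\wedge\ov x=\del\delb f+c\beta$ with $c\neq 0$. The form $\tau:=\theta\wedge\ov\theta$ is $\del\delb$-closed; it is exactly the generator of the Aeppli summand $[\tau]_A$ appearing in the double-complex decomposition of Section \ref{seccohom}. Pairing this $\del\delb$-closed form against the $\del\delb$-exact form $\del\delb f$ by parts gives $\int_S\del\delb f\wedge\theta\wedge\ov\theta=0$, and therefore
\[I=c\int_S\beta\wedge\theta\wedge\ov\theta=c\,\langle[\beta]_{BC},[\tau]_A\rangle,\]
the Bott--Chern--Aeppli pairing of the class $[\beta]_{BC}$ generating $\Ker\bigl(H^{1,1}_{BC}\to H^{1,1}_\delb\bigr)$ from Lemma \ref{Verblemma} against the Aeppli class $[\tau]_A$.

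\emph{Step 3: nonvanishing of the pairing — the main obstacle.} I expect this to be the hard part. The natural route is to invoke the nondegeneracy (perfectness) of the Bott--Chern--Aeppli pairing $H^{1,1}_{BC}\times H^{1,1}_A\to\CC$ together with the explicit decomposition of Section \ref{seccohom}: the acyclic ``square'' summands contribute nothing to either cohomology, the pairing is block-diagonal with respect to the indecomposable summands, and $\beta$ and $\tau$ span the unique pair of mutually dual length-three $\mathrm{L}$-shaped summands. Consequently $\langle[\beta]_{BC},[\tau]_A\rangle\neq 0$, giving $I\neq 0$ and hence both products are nonzero. The crucial point is that this cannot be obtained by Stokes' theorem alone: every integration by parts on $\int_S\beta\wedge\theta\wedge\ov\theta$ (using $\beta=\delb\theta=\del\ov\theta=d\rho$ with $\rho$ real) returns the same integral, which reflects precisely that the statement is genuine content of the nondegeneracy rather than a formal identity. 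A more self-contained but delicate alternative is to argue by contradiction: if $x\wedge\theta\equiv 0$ then $\theta=g\,x$ on $\{x\neq 0\}$, and using that $\beta=d\theta$ is real one finds $\beta=\nu\,x\wedge\ov x$ together with strong constraints on $g$ (locally it depends only on the coordinate dual to $x$); the aim would then be to deduce that $x$ is $\del$-exact, contradicting the fact recorded for bidegree $(0,0)$ of the decomposition that $\del\delb h=0$ forces $h$ constant. Making the sign of the relevant Levi form definite, so as to apply the maximum principle to $\del\delb f$, is the delicate step in this second approach.
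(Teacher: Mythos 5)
First, a point of comparison: the paper does not actually prove Lemma \ref{omegainj}. It records the statement as known from \cite{Verbook}, \cite{Verb} for surfaces containing no complex curves, observes that it holds in general a posteriori by Kodaira's classification, and explicitly remarks that ``a direct proof would be desirable.'' So you are attempting something the paper deliberately leaves open, and the attempt has to be judged on whether it closes that gap; it does not.

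Your Steps 1--2 are sound in substance. The single integral $I=\int_S x\wedge\ov x\wedge\theta\wedge\ov\theta$ does control both products: pointwise $x\wedge\ov x\wedge\theta\wedge\ov\theta=-(x\wedge\theta)\wedge\ov{(x\wedge\theta)}\leq 0$ vanishes exactly where $x\wedge\theta$ does, so $I\neq 0$ is equivalent to $x\wedge\theta\neq 0$; and $x\wedge\ov\theta=0$ would force $\theta\equiv 0$ (a $(1,0)$-covector and a $(0,1)$-covector have nonzero wedge unless one of them vanishes, and $x$ vanishes only on a proper analytic subset), hence $\beta=0$, so that half of the lemma is essentially free. (Your ``if and only if'' for the $(1,1)$-form and the claimed identity of norms are false as stated---wedging a $(1,1)$-form with its conjugate does not compute its norm, e.g.\ $(x\wedge\ov x)\wedge\ov{(x\wedge\ov x)}=0$---but only the correct implication is used.) The genuine gap is Step 3, exactly where you locate the difficulty. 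Non-degeneracy of the Bott--Chern--Aeppli pairing guarantees that \emph{some} Aeppli class pairs nontrivially with $[\beta]_{BC}$; it does not identify that class as $[\theta\wedge\ov\theta]_A$, i.e.\ it does not rule out that $[\theta\wedge\ov\theta]_A$ lies in the ``dot'' part of $H^{1,1}_A$ or vanishes altogether. In the paper, the assertion that one may take $\tau=\theta\wedge\ov\theta$ as the generator dual to $[\beta]_{BC}$, with $\int_S\beta\wedge\tau\neq 0$, appears in Section \ref{secmulti} as a \emph{consequence} of Lemma \ref{omegainj}, not as an input available beforehand: since $x\wedge\ov x=\del\delb f+c\beta$ and $\del\delb(\theta\wedge\ov\theta)=0$, one has $\int_S\beta\wedge\tau=c^{-1}I$, so asserting the pairing is nonzero is literally a restatement of the lemma. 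Your first route is therefore circular. Your second, contradiction-based sketch (from $x\wedge\theta\equiv 0$ deduce $\theta=gx$ and derive a contradiction) is the honest direction---and is close in spirit to the argument of \cite{Verbook} in the curve-free case---but you do not carry it out, so the proof as written has a hole precisely at its only substantive step.
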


In the case $\Omega^1\neq 0$, the above lemmas allow to choose representatives
$\beta=\mathbf{i}(x\wedge\ov x)$, where $0\neq x\in\Omega^1$ and $\tau=\theta\wedge \ov \theta$ for the non-trivial Bott-Chern and Aeppli classes in bidegree $(1,1)$ depicted in the double complex decomposition of the previous section. These are dual to each other, in the sense that they fit in the non-degenerate pairing
\[H^{1,1}_{BC}\times H^{1,1}_A\lra \CC\,;\,([\beta]_{BC},[\tau]_A)\mapsto \int_S \beta\wedge\tau =\mathbf{i}\int_S x\wedge \ov x\wedge \theta\wedge\ov \theta\neq 0.\]
In particular, from these identifications and the double complex decomposition, one may easily describe the
product structures of Dolbeault, de Rham and Bott-Chern cohomologies.
The following result will be particularly useful.
Denote by $H_{BC}^{\leq 1}$ the sub-algebra of $H_{BC}^{*,*}$ generated in degrees $\leq 1$. We have:
\begin{lemm}\label{Bottchernbasis}
Assume that $k:=\mathrm{dim\,}\Omega^1>0$.
The algebra $H_{BC}^{\leq 1}$ is generated by elements
$\{x_i,\ov x_j\}_{j=1}^k$ with the only non-trivial products given by
$x_i\cdot\bar{x}_i=x_1\cdot \bar{x}_1$ for all $i\neq 1$.
\end{lemm}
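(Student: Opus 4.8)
The plan is to compute $H_{BC}^{\leq 1}$ degree by degree, using that it is generated in degrees $\leq 1$ and that $S$ has real dimension four. In degree one the identifications $H^{1,0}_{BC}\cong\Omega^1$ and $H^{0,1}_{BC}\cong\ov\Omega^1$ recorded above show that the generators are precisely a basis $x_1,\dots,x_k$ of holomorphic $1$-forms together with their conjugates $\ov x_1,\dots,\ov x_k$. All the content of the lemma therefore lies in the products of these $2k$ generators. For products of two generators landing in degree two: the $(2,0)$ and $(0,2)$ products $x_i\wedge x_j$ and $\ov x_i\wedge\ov x_j$ vanish outright by Lemma \ref{Verbmulti}(1), so only the $(1,1)$-products $x_i\wedge\ov x_j$ survive, and by Lemma \ref{Verbmulti}(2) each of these equals a $\del\delb$-exact form plus a multiple of $\beta$. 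Since $\del\delb$-exact forms are trivial in Bott--Chern cohomology, we get $x_i\cdot\ov x_j=c_{ij}[\beta]_{BC}$ in $H^{1,1}_{BC}$ for a matrix of constants $(c_{ij})$, and the whole degree-two part of the subalgebra is spanned by $[\beta]_{BC}$.

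The key structural step is to normalize the matrix $(c_{ij})$ by a linear change of basis of $\Omega^1$. Taking complex conjugates, the identity $\ov{x_i\wedge\ov x_j}=-x_j\wedge\ov x_i$ combined with the reality of $\beta$ forces $c_{ji}=-\ov{c_{ij}}$; that is, $(c_{ij})$ is anti-Hermitian and $\mathbf{i}(c_{ij})$ is Hermitian. To obtain definiteness I would upgrade the argument of Lemma \ref{Verbmulti}(3) from a single form to the whole space: for any nonzero $u=\sum_i\lambda_i x_i\in\Omega^1$ the form $\mathbf{i}\,u\wedge\ov u$ is semi-positive, so $\int_S \mathbf{i}\,u\wedge\ov u\wedge w_g>0$ for a Gauduchon form $w_g$; substituting $\mathbf{i}\,u\wedge\ov u\equiv \mathbf{i}\bigl(\sum_{i,j}c_{ij}\lambda_i\ov\lambda_j\bigr)\beta$ shows that the Hermitian form $\mathbf{i}(c_{ij})$ is definite. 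A Gram--Schmidt change of basis then turns it into a scalar multiple of the identity, which is exactly the assertion $x_i\cdot\ov x_i=x_1\cdot\ov x_1$ for all $i$ and $x_i\cdot\ov x_j=0$ for $i\neq j$.

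It remains to verify that all products of three or more generators vanish, so that these are the only nontrivial relations. Here I would use the representative $\beta=\mathbf{i}(x_1\wedge\ov x_1)$ modulo $\del\delb$. Then $\beta\wedge x_k$ reduces to $-\del\delb f\wedge x_k$, since $x_1\wedge\ov x_1\wedge x_k=0$ by Lemma \ref{Verbmulti}(1); and because $x_k$ is $d$-closed of type $(1,0)$ one has $\del\delb f\wedge x_k=\del\delb(f x_k)$, which is $\del\delb$-exact. The same computation with $\ov x_k$ (using $\ov x_1\wedge x_k$-type vanishing and $\del\ov x_k=\delb\ov x_k=0$) gives $\beta\wedge\ov x_k=\del\delb$-exact. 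Hence every degree-three product is zero in $H_{BC}$, and the unique degree-four product $[\beta]_{BC}^2$ vanishes because $\int_S\beta\wedge\beta=0$ by Stokes while $H^{2,2}_{BC}\cong\CC$ is detected by integration.

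The main obstacle is the definiteness step: extracting positive-definiteness of the Hermitian form $\mathbf{i}(c_{ij})$ uniformly over all of $\Omega^1$ from the Gauduchon-metric positivity, rather than merely the nonvanishing of the diagonal entries furnished directly by Lemma \ref{Verbmulti}(3). Once definiteness is established, both the diagonalization and the vanishing of the higher products are formal, the latter depending only on the closedness of holomorphic forms and the convenient choice of representative for $\beta$ available when $\Omega^1\neq 0$.
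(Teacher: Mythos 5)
Your proposal is correct and follows essentially the same route as the paper: identify the degree-one generators via $H^{1,0}_{BC}\cong\Omega^1$, reduce the degree-two products to the anti-Hermitian matrix $(c_{ij})$ via Lemma \ref{Verbmulti}, and diagonalize after establishing definiteness of $\mathbf{i}(c_{ij})$. You are in fact slightly more careful than the paper on two points it leaves implicit — running the Gauduchon positivity argument over arbitrary linear combinations $u=\sum\lambda_i x_i$ to get genuine positive-definiteness rather than just nonvanishing diagonal entries, and checking that all products of three or more generators vanish — both of which are worthwhile additions.
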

\begin{proof}
Recall first we have isomorphisms $H^{1,0}_{BC}\cong \Omega^1$ and $H^{0,1}_{BC}\cong \ov\Omega^1$.
Take a basis $\{x_j\}$ of $\Omega^1$. By (2) of Lemma \ref{Verbmulti} we have
\[x_i\wedge x_j=0,\, \ov x_i\wedge \ov x_j=0\text{ and }x_i\wedge \ov x_j=c_{ij}\beta+\del\delb f_{ij},\]
where $\beta$ is a representative of $[\beta]_{BC}\in H^{1,1}_{BC}$ and $f_{ij}\in\Cc^\infty$.
By (3) of Lemma \ref{Verbmulti}, we may assume $\beta=\mathbf{i}(x_1\wedge \ov x_1)$,
so in particular we have $[\beta]_{BC}\in H^{\leq 1}_{BC}$. This allows to describe $H^{\leq 1}_{BC}$ as the algebra generated by
$\{[x_i],[\ov x_j]\}$ with the product structure
\[[x_i]\cdot[x_j]=0,\, [\ov x_i]\cdot [\ov x_j]=0\text{ and }[x_i]\cdot [\ov x_j]=c_{ij}[\beta]_{BC},\]
with $c_{ii}\neq 0$ and $c_{ji}=-\ov{c_{ij}}$.
Moreover, since $c_{11}=-\mathbf{i}$ and $\mathbf{i}(x\wedge \ov x)$ is always semi-positive for any holomorphic form $x$, we have
$c_{ii}\in -\mathbf{i}\RR^+$.
Therefore the matrix $\mathbf{i}c_{ij}$ is symmetric and positive-definite, so changing basis we may assume $c_{ij}=0$ for all $i\neq j$ and $c_{ii}=-\mathbf{i}$ for all $i$.
\end{proof}

We will also use the following lemma, which is a direct
 consequence of Lemma \ref{omegainj} together with the double complex decomposition:

\begin{lemm}\label{variousproducts}
Let $f\in\Cc^\infty$ such that
$\del\delb f\wedge\Omega^1=\del\delb f\wedge\ov\Omega^1=0$ and let
 $x\in\Omega^1$. Then:
\begin{enumerate}
 \item $\delb f\wedge\ov x=0$.
 \item $\delb f\wedge x=\del\delb g$ for some $g\in\Cc^\infty$ such that $\del\delb g\wedge\Omega^1=\del\delb g\wedge \ov\Omega^1=0$.
\end{enumerate}
\end{lemm}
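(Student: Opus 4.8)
The plan is to read off both statements from the bidegrees of the two products together with their location in the double complex decomposition of Section~\ref{seccohom}; I may assume $\Omega^1\neq0$, since otherwise $x=0$ and everything is trivial. The common starting observation is that holomorphic $1$-forms on a compact complex surface are closed, so $\del x=\delb x=0$ and hence $\del\ov x=\delb\ov x=0$ as well. This immediately gives
\[\delb f\wedge\ov x=\delb(f\ov x),\qquad \delb f\wedge x=\delb(fx),\]
so both products are $\delb$-exact. Using in addition $\del x=\del\ov x=0$, the two hypotheses $\del\delb f\wedge\Omega^1=0$ and $\del\delb f\wedge\ov\Omega^1=0$ translate into
\[\del(\delb f\wedge\ov x)=\del\delb f\wedge\ov x=0,\qquad \del(\delb f\wedge x)=\del\delb f\wedge x=0;\]
this is the only place where the assumption on $f$ is used. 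So in both cases the relevant product is simultaneously $\del$-closed and $\delb$-exact.

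For part~(1), the form $\delb f\wedge\ov x$ lives in $\Aa^{0,2}$. I would appeal to the double complex decomposition: the only indecomposable summands meeting bidegree $(0,2)$ are the dots (anti-holomorphic $2$-forms, which are $\del$- and $\delb$-closed but not exact) and the squares based at $(0,1)$, whose $(0,2)$-corner $\delb\alpha$ is $\delb$-exact yet has $\del\delb\alpha\neq0$. A form that is at once $\del$-closed and $\delb$-exact must therefore have trivial component of each type, and so vanishes; hence $\delb f\wedge\ov x=0$.

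For part~(2), the form $\delb f\wedge x\in\Aa^{1,1}$ is $\del$-closed, $\delb$-closed and $\delb$-exact, so it defines a Bott-Chern class in the kernel of $H^{1,1}_{BC}\to H^{1,1}_\delb$. By Lemma~\ref{Verblemma} this kernel is $\langle[\beta]_{BC}\rangle$, whence $\delb f\wedge x=c\,\beta+\del\delb g$ for some $c\in\CC$ and $g\in\Cc^\infty$. To show $c=0$ I would pair against the Aeppli class $[\tau]_A$: since the pairing $\int_S(\cdot)\wedge\tau$ is well defined on $H^{1,1}_{BC}$ and $\int_S\beta\wedge\tau\neq0$, it suffices to check $\int_S\delb f\wedge x\wedge\tau=0$. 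Integrating by parts and using $\delb\tau=\delb\theta\wedge\ov\theta=\beta\wedge\ov\theta$ gives $\int_S\delb f\wedge x\wedge\tau=\int_S fx\wedge\beta\wedge\ov\theta$, which vanishes because, for the representative $\beta=\mathbf{i}(x_0\wedge\ov x_0)$, we have $x\wedge\beta=\mathbf{i}\,x\wedge x_0\wedge\ov x_0=0$ by Lemma~\ref{Verbmulti}. Thus $c=0$ and $\delb f\wedge x=\del\delb g$. Finally, $g$ satisfies the required conditions: for every $y\in\Omega^1$ we have $\del\delb g\wedge y=\delb f\wedge(x\wedge y)=0$ since $x\wedge y=0$, and $\del\delb g\wedge\ov y=-\delb f\wedge\ov y\wedge x=0$ by part~(1).

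I expect the computation of the coefficient $c$ to be the main obstacle, since it relies both on the non-degeneracy of the Bott-Chern/Aeppli pairing and on the vanishing $x\wedge\beta=0$, which in turn forces the use of the explicit representative $\beta=\mathbf{i}(x_0\wedge\ov x_0)$ and of the product relations of Lemma~\ref{Verbmulti}. The structural input in part~(1)---replacing an analytic vanishing argument by the classification of the bicomplex---is the other point requiring care.
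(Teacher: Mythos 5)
Your proof is correct, and part (1) follows essentially the paper's own route: both arguments reduce to the fact that a form in $\Aa^{0,2}$ which is simultaneously $\delb$-exact and $\del$-closed must vanish, by the shape of the double complex decomposition (the paper phrases this by writing $\delb f\wedge\ov x=\delb w$ and noting that $\del\delb w=0$ forces the vanishing). For part (2) you arrive at the same intermediate identity $\delb f\wedge x=c\beta+\del\delb g$ --- you via the kernel of $H^{1,1}_{BC}\to H^{1,1}_{\delb}$ from Lemma \ref{Verblemma}, the paper via the bicomplex decomposition in bidegree $(1,1)$ --- but you kill the coefficient $c$ by a genuinely different argument. The paper proceeds by contradiction: if the coefficient were non-zero one could choose $\theta=\tfrac1a(fx+\del g)$, deduce that $\theta\wedge x=\del(gx)$ is $\del$-exact and $\delb$-closed, hence zero, contradicting Lemma \ref{omegainj}. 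You instead pair against the Aeppli class $[\tau]_A=[\theta\wedge\ov\theta]_A$, integrate by parts, and use $x\wedge\beta=0$ for the representative $\beta=\mathbf{i}(x_0\wedge\ov x_0)$. Both routes ultimately rest on Lemma \ref{omegainj} (in your case it enters through the non-degeneracy $\int_S\beta\wedge\tau\neq 0$ established in Section \ref{secmulti}), but your pairing argument avoids the somewhat delicate step of re-choosing the form $\theta$ and makes the dependence on the Bott--Chern/Aeppli duality explicit; the paper's version is more self-contained in that it only uses the bicomplex decomposition and Lemma \ref{omegainj} directly. Your closing verification that $g$ again satisfies $\del\delb g\wedge\Omega^1=\del\delb g\wedge\ov\Omega^1=0$ is the step the paper leaves implicit, and your derivation of it from $x\wedge y=0$ and from part (1) is exactly what is needed.
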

\begin{proof}
Since $\delb f\wedge \ov x=\delb(f\wedge \ov x)\in\mathrm{Im}(\delb)$ we may write $\delb f\wedge x=\delb w$ for some $w\in\Aa^{0,1}$. But since $\del\delb f\wedge x=0$ this gives $\del\delb w=0$ which implies $w=0$.
For the second identity, note we may write 
\[\delb f\wedge x=\del\delb g+a\beta+\delb z,\]
for some constant $a$, and where $z\in \Aa^{1,0}$ satisfies $\del\delb z\neq 0$ if $z\neq 0$. Since
\[\del(\delb f\wedge x)=\del\delb f\wedge x=0,\]
it follows that $z=0$.
Assume that $a\neq 0$. Then we  may choose \[\theta={1\over a} (f\cdot x+\del g),\]
and so $\theta\cdot x= \del (g\cdot x)$.
Since $\delb (\theta\cdot x)=0$ this implies $g\cdot x=0$ and so $\theta\wedge x=0$, which contradicts Lemma \ref{omegainj}. Therefore $a=0$ and the lemma follows.
\end{proof}


\section{A 1-model for non-Kähler surfaces}

Let us first recall the notion of 1-model from rational homotopy theory:
\begin{defi}Let $f:A\to B$ be a map of commutative dg-algebras. Then $f$ is said to be a \textit{1-quasi-isomorphism} if $H^1(f)$ is an isomorphism and $H^2(f)$ is injective.
A \textit{1-model} for a commutative dg-algebra $A$ is a commutative dg-algebra $\Mm$ together with a string of 1-quasi-isomorphisms connecting $\Mm$ and $A$.
\end{defi}

When $b^1=1$, a 1-model for $\Aa_{\dR}(S)$ is given by $\Lambda(\gamma)$, where $[\gamma]$ generates $H^1_d$. We assume from now on that $b^1>1$.
This implies that the space of holomorpic 1-forms $\Omega^1$ is non-trivial and so Bott-Chern cohomology is also non-trivial in degree 1.

We will first build a complex model, and show later how the same construction works over the real numbers.
Consider the commutative differential bigraded algebra given by
\[\Mm:=H_{BC}^{\leq 1}\otimes\Lambda(\Theta,\ov \Theta),\]
where
$\Theta$ and $\ov \Theta$ denote elements of bidegree $(1,0)$ and $(0,1)$ respectively and
$H_{BC}^{\leq 1}$ is the algebra described in Lemma \ref{Bottchernbasis}.
Define a differential on $\Mm$ by letting
\[d\Theta=d\ov \Theta=[\beta]_{BC}.\]

\begin{theo}\label{theomodel}Let $S$ be a compact non-Kähler surface with $b^1>1$.
Then the differential bigraded algebra $(\Mm,d)$
is a 1-model for $\Aa_{\dR}(S)\otimes\CC$.
\end{theo}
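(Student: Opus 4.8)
The plan is to build an explicit map $\varphi\colon\Mm\to\Aa$ from the model to the complex de Rham algebra and verify it is a $1$-quasi-isomorphism, i.e. that it induces an isomorphism on $H^1$ and an injection on $H^2$. First I would define $\varphi$ on generators: send the Bott--Chern generators $x_i,\ov x_j$ to the chosen holomorphic and antiholomorphic representatives in $\Omega^1,\ov\Omega^1$, send the class $[\beta]_{BC}$ to the explicit real exact form $\beta$ of Lemma~\ref{Verblemma} (which by Lemma~\ref{Bottchernbasis} we may take to be $\mathbf{i}(x_1\wedge\ov x_1)$), and send $\Theta\mapsto\theta$, $\ov\Theta\mapsto\ov\theta$, where $d\theta=d\ov\theta=\beta$ as fixed in Section~\ref{secmulti}. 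The key point that makes $\varphi$ a well-defined morphism of dg-algebras is \emph{multiplicativity}: the relations defining $\Mm$ (namely $x_i\wedge x_j=0$, $\ov x_i\wedge\ov x_j=0$, and $x_i\wedge\ov x_j=c_{ij}[\beta]_{BC}$) must hold at the level of the actual forms in $\Aa$ up to the allowed $\del\delb$-exact corrections. This is exactly the content of Lemma~\ref{Verbmulti}, so the algebra relations of $H^{\leq 1}_{BC}$ are compatible with the wedge product of representatives, and $\varphi$ respects $d$ by construction since $d\Theta=[\beta]_{BC}\mapsto\beta=d\theta$.

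Next I would compute $H^1(\varphi)$. In degree $1$ the model has generators $x_i,\ov x_j,\Theta,\ov\Theta$, with $d\Theta=d\ov\Theta=[\beta]_{BC}$, so the closed elements of $\Mm^1$ are spanned by the $x_i$, the $\ov x_j$, and the combination $\Theta-\ov\Theta$ (since $d(\Theta-\ov\Theta)=0$), while $\Theta+\ov\Theta$ is not closed and there are no degree-$1$ coboundaries. Under $\varphi$ these map to $\Omega^1$, $\ov\Omega^1$ and $\theta-\ov\theta=2\mathbf{i}\gamma$, which is precisely the decomposition $H^1_d\cong\Omega^1\oplus\ov\Omega^1\oplus\langle[\gamma]\rangle$ from Section~\ref{seccohom}. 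Hence $H^1(\varphi)$ is an isomorphism, and here the dimension count uses $h^{1,0}=\tfrac12(b^1-1)$ together with the purity defect contributing the single class $[\gamma]$.

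The main obstacle will be degree $2$, where I must show $H^2(\varphi)$ is injective. I would first describe $H^2(\Mm)$ by listing the degree-$2$ elements of $\Mm$ and applying $d$: these come from products $x_i\wedge\ov x_j$ (which in the model equal $c_{ij}[\beta]_{BC}$, hence contribute the Bott--Chern class $[\beta]_{BC}$ up to scalar), products with $\Theta,\ov\Theta$ such as $x_i\Theta$, $\ov x_j\ov\Theta$, $x_i\ov\Theta$, $\ov x_j\Theta$, and the element $\Theta\ov\Theta$. One computes $d(x_i\Theta)=-x_i\wedge[\beta]_{BC}$ and $d(\Theta\ov\Theta)=([\beta]_{BC})(\ov\Theta-\Theta)$ using the Leibniz rule and $d\Theta=d\ov\Theta=[\beta]_{BC}$; since $[\beta]_{BC}\cdot x_i$ and $[\beta]_{BC}\cdot\ov x_j$ vanish in $H^{\leq1}_{BC}$ (degree reasons, as $\beta$ lives in bidegree $(1,1)$ and the products land in the already-$\del\delb$-exact part), these boundary relations must be tracked carefully. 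The crucial computations are the wedge products $\theta\wedge x_i$, $\theta\wedge\ov x_j$, $\theta\wedge\ov\theta$ in $\Aa$, and verifying that no nonzero cohomology class in $H^2(\Mm)$ is killed in $H^2_d$; this is where Lemma~\ref{omegainj} (non-vanishing of $x\wedge\theta$ and $x\wedge\ov\theta$) and Lemma~\ref{variousproducts} enter, guaranteeing that the images of the model's degree-$2$ generators are independent in de Rham cohomology and that the $\Theta\ov\Theta$ class maps to the nontrivial Aeppli-dual class $[\tau]_A$ pairing nontrivially against $[\beta]_{BC}$. I would close by matching dimensions: the surviving $H^2(\Mm)$ classes correspond under $\varphi$ to independent classes in $H^2_d\cong H^{2,0}_\delb\oplus H^{1,1}_\delb\oplus H^{0,2}_\delb$, and injectivity (rather than isomorphism) is all that is required, which relaxes the need for a complete enumeration of $H^2_d$. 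The final remark is that, since all the forms $x_i,\ov x_j,\beta,\theta,\ov\theta$ and the structure constants behave well under complex conjugation, the same construction descends to a real model $\Mm_\RR$, giving the stated $1$-model for $\Aa_{\dR}(S)$ itself.
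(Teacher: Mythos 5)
There is a genuine gap at the very first step of your argument: the direct map $\varphi\colon\Mm\to\Aa$ you propose is not a morphism of dg-algebras. In $\Mm=H^{\leq 1}_{BC}\otimes\Lambda(\Theta,\ov\Theta)$ the relations of Lemma \ref{Bottchernbasis} hold \emph{exactly}: for instance $x_i\cdot\ov x_i=x_1\cdot\ov x_1$ for all $i$, and $x_i\cdot\ov x_j=0$ for $i\neq j$. A multiplicative map must therefore send $x_i\wedge\ov x_i$ and $x_1\wedge\ov x_1$ to the \emph{same} form, and $x_i\wedge\ov x_j$ ($i\neq j$) to zero. But for the actual holomorphic representatives, Lemma \ref{Verbmulti} only gives $x_i\wedge\ov x_j=c_{ij}\beta+\del\delb f_{ij}$ with $\del\delb f_{ij}$ generically nonzero; ``equal up to $\del\delb$-exact corrections'' is exactly what you write, and exactly what is not good enough for an algebra homomorphism. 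There is no evident choice of closed $1$-form representatives making all these wedge products literally proportional to a single $2$-form, so the map does not descend from the free algebra to $\Mm$, and the rest of the argument (the $H^1$ and $H^2$ computations, which are otherwise along the right lines) has nothing to be applied to.

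The paper resolves this by replacing the single map with a roof
\[\Mm\stackrel{\psi}{\longleftarrow}\Bb\otimes\Lambda(\Theta)\stackrel{\varphi}{\longrightarrow}\Aa,\]
where $\Bb$ is the subalgebra of $\Aa$ generated by $\Omega^0\oplus\Omega^1\oplus\ov\Omega^1\oplus\langle\ov\theta\rangle\oplus\delb\Dd\oplus\del\delb\Dd$, with $\Dd$ the space of functions $f$ with $\del\delb f\wedge\Omega^1=\del\delb f\wedge\ov\Omega^1=0$. The point is that the intermediate algebra \emph{contains} the $\del\delb$-exact correction terms, so the map into $\Aa$ is just an inclusion (trivially multiplicative), while the map onto $\Mm$ kills $\delb\Dd$ and $\del\delb\Dd$; its multiplicativity is then what Lemmas \ref{Verbmulti} and \ref{variousproducts} are really for. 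If you want to keep a one-arrow argument you would have to prove that the correction terms can be chosen to vanish identically, which is not claimed anywhere in the paper and is unlikely to hold. I would also flag a smaller inaccuracy: you treat $[\beta]_{BC}$, and implicitly $\ov\Theta$ versus $\ov\theta$, as free generators to be assigned images, whereas $[\beta]_{BC}$ is already determined in $\Mm$ as the product $x_1\cdot\ov x_1$ (up to a constant), which is precisely where the multiplicativity constraint bites.
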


Before going through the actual proof, let us explain the strategy by analogy with the proof of formality for compact Kähler manifolds. In that case, de Rham cohomology is shown to be a model of the complex de Rham algebra by considering the inclusion and projection maps
\[(H_\delb,0)\stackrel{p}{\longleftarrow}(\Ker \delb,\del)\stackrel{i}{\longrightarrow}(\Aa_{\dR}\otimes\CC,d)\]
Then the $\del\delb$-lemma ensures that these maps are quasi-isomorphisms.
In our case, the $\del\delb$-lemma needs to be replaced by the various rules given by the double complex decomposition described in Section \ref{seccohom}, and $H_{\delb}$ is replaced by the algebra $\Mm$ defined above.
The intermediate algebra $(\Ker \delb,\del)$ will be in spirit very similar, with some minor adjustments that are described in the proof below.

\begin{proof}
We construct a roof of 1-quasi-isomorphisms connecting $\Aa:=\Aa_{\dR}(S)\otimes\CC$ and $\Mm$.
For that, let us first denote by $\Bb$ the sub-algebra of $\Aa$
generated by the following subspaces:
\[\Omega^0\oplus \Omega^1\oplus \ov\Omega^1\oplus \langle\ov\theta\rangle\oplus \delb \Dd\oplus \del\delb \Dd,\]
where
\[\Dd:=\{f\in\Cc^\infty; \del\delb f\wedge\Omega^1=\del\delb f\wedge \ov\Omega^1=0\}.\]
We have $d(\Bb)\subseteq \Bb$ by construction, and $\beta=\mathbf{i}(x_1\wedge \ov x_1)\in\Bb$.
Now, consider the dg-algebra given by
\[\Bb\otimes\Lambda(\Theta)\text{ with }d\Theta=\beta.\]
The generators of this algebra are indicated below with bullets.

 \begin{center}
 \begin{tikzpicture}
\draw[very thin, gray]  (0,0)   -- (0,6);
\draw[very thin, gray]  (0,0)   -- (6,0);
\draw[very thin, gray]  (0,6)   -- (6,6);
\draw[very thin, gray]  (6,0)   -- (6,6);
\draw[very thin, gray]  (0,2)   -- (6,2);
\draw[very thin, gray]  (2,0)   -- (2,6);
\draw[very thin, gray]  (0,4)   -- (6,4);
\draw[very thin, gray]  (4,0)   -- (4,6);

\draw[thick] (1.5,1.5) rectangle (2.5,2.5);
\draw[thick] (3.5,3.5) rectangle (4.5,4.5);
\draw[thick] (3.5,1.5) rectangle (4.5,2.5);
\draw[thick] (1.5,3.5) rectangle (2.5,4.5);

\draw[thick]  (1,2.9)   -- (2.9,2.9);
\draw[thick]  (2.9,1)   -- (2.9,2.9);

\draw[thick]  (3.1,3.1)   -- (5,3.1);
\draw[thick]  (3.1,3.1)   -- (3.1,5);

\draw (1,2.9) node {\small{$\bullet$}};
\draw (1,2.9) node[left] {\small{$\langle\ov\theta\rangle$}};

\draw (2.9,1) node {\small{$\bullet$}};
\draw (2.9,1) node[below] {\small{$\langle\Theta\rangle$}};

\draw (3.5,1) node {\small{$\bullet$}};
\draw (3.5,1) node[below] {\small{$\,\Omega^1$}};

\draw (.8,3.5) node {\small{$\bullet$}};
\draw (.8,3.5) node[left] {\small{$\ov\Omega^1$}};

\draw (1,1) node {\small{$\bullet$}};
\draw (1,1) node[below] {\small{$\Omega^0$}};

\draw (1.5,2.5) node {\small{$\bullet$}};
\draw (1.5,2.5) node[left] {\small{$\delb \Dd$}};

\draw (2.5,2.5) node {\small{$\bullet$}};

\end{tikzpicture}
\end{center}

Define a map $\varphi:\Bb\otimes\Lambda(\Theta)\to \Aa$ by the inclusion on the subspaces of $\Aa$ and let
$\varphi(\Theta):=\theta$.
This makes $\varphi$ compatible with the differentials.
Moreover, $\varphi$ is clearly multiplicative.

To define a map $\psi:\Bb\otimes\Lambda(\Theta)\to \Mm$,
we send $\Omega^0$, $\Omega^1$ and $\ov\Omega^1$ to their Bott-Chern classes and
$\Theta$ and $\ov\theta$ are sent to their obvious namesakes. The spaces $\delb \Dd$ and $\del\delb \Dd$ are sent to 0.

The map $\psi$ is compatible with differentials, as
\[\psi(d\Theta)=\psi(\beta)=[\beta]_{BC}=d\psi(\Theta).\]
To prove that $\psi$ is multiplicative it suffices to verify the following:
\begin{itemize}
 \item Given $x,y\in\Omega^1$, then
 by (2) of Lemma \ref{Verblemma}, we have
$x\wedge \ov y=\del \delb g+c\beta$
and so
\[\psi(x)\wedge\psi(\ov y)=[x\wedge \ov y]_{BC}=c[\beta]_{BC}=\psi(x\wedge \ov y).\]
\item Let $x\in \Omega^1$ and $f\in\Dd$. Then by (2) of Lemma \ref{variousproducts} we have $\delb f\wedge x=\delb g$, with $g\in\Dd$.
Therefore we have
\[\psi(\delb f)\wedge\psi(x)=\psi(\delb f\wedge x)=0.\]
Moreover, by (1) of Lemma \ref{variousproducts} we have $\delb f\wedge \ov x=0$ and so
\[\psi(\delb f)\wedge\psi(\ov x)=\psi(\delb f\wedge \ov x)=0.\]
\end{itemize}

This proves that $\psi$ is multiplicative.

We next prove that the maps $\varphi$ and $\psi$ are 1-quasi-isomorphisms.

\underline{$H^1_d(\varphi)$ is an isomorphism:}
An element of degree 1 in $\Bb\otimes\Lambda(\Theta,\ov\Theta)$
has the form
\[\xi=x+\ov y+\delb g+a\Theta+b\ov\theta,\]
were $x,y\in\Omega^1$, $a,b\in\CC$ and $g\in\Dd$.
Therefore its differential is
\[d\xi=\del\delb g+(a+b)\beta.\]
Since $\beta$ defines a non-trivial Bott-Chern class, for $\xi$ to be closed we must have $g=0$  and $b=-a$. Therefore we may write
\[\xi=x+\ov y+a(\Theta-\ov\theta).\]
Now, assume $\varphi(\xi)$ is $d$-exact, for instance $\xi=dh$.
Then
\[\delb \xi=a\beta=\del\delb h.\]
Since $[\beta]_{BC}\neq 0$, we must have $a=0$ and $h=0$, which gives $x=\ov y=0$.
This proves injectivity. Surjectivity is straightforward, noting that $\varphi(\Theta-\ov\theta)$ generates the purity defect.

\underline{$H^1_d(\psi)$ is an isomorphism:}
Let $\xi=x+\ov y+a(\Theta-\ov\theta)$ be a $d$-closed element of degree 1 as before. Then $\psi(\xi)=[x]_{BC}+[y]_{BC}+a(\Theta-\ov\theta)$ which is never $d$-exact. Surjectivity is straightforward, since
$H^1(\Mm)$ is isomorphic to $\Omega^1\oplus \ov\Omega^1\oplus \langle\Theta-\ov\Theta\rangle$.

\underline{$H^2_d(\varphi)$ is injective:} Let $\xi$ be a $d$-closed element of $\Bb\otimes\Lambda(\Theta,\ov\Theta)$ of degree 2.
Writing $\xi=\xi_0+\xi_1+\xi_2$, with $\xi_i$ of bidegree $(i,2-i)$, the condition $d\xi=0$ gives
\[\del \xi_0+\delb \xi_1=0\text{ and }\del \xi_1+\delb \xi_2=0.\]
We will show that, in fact, we have $\del \xi=\delb\xi=0$.
For this, note first that the only generator with $\delb\neq 0$ is $\Theta$. Assume that $\xi$ has a summand that is multiple of $\Theta$. Such a summand will be of the form
\[(x+\ov y+\delb f+a\ov\theta)\wedge\Theta,\]
with $x,y\in\Omega^1$, $f\in\Dd$, and  and $a\in\CC$.
The differential of $\xi$ therefore has a summand that is multiple of $\Theta$:
\[(\del\delb f+a\beta)\wedge\Theta.\]
Therefore, for $\xi$ to be closed, we must have $\del\delb f+a\beta=0$. Since
 $\beta$ defines a non-trivial Bott-Chern class, this forces $a=0$ and $\del\delb f=0$ (which implies $f=0$). This shows that the only summands of $\xi$ that are multiples of $\Theta$ are in $(\Omega^1\oplus\ov\Omega^1)\wedge\Theta$, whose differential is trivial, since
 $\Omega^1\wedge\beta=0$ and $\ov\Omega^1\wedge \beta=0$ by assumption.
 This proves $\delb\xi=0$ and, since $\xi$ is $d$-closed, we also have $\del\xi=0$.
 Now, since $\del\xi_1=0$ it follows that $\varphi(\xi_1)$ is not $\delb$-exact. Similarly, $\varphi(\xi_2)$ ois not $\del$-exact.
 Now, $\xi_1$ is of the form
 \[\xi_1=x\wedge \ov y+\Theta\wedge \ov y+\delb f\wedge x+\del\delb g,\]
 with $f,g\in \Dd$. Using the multiplicative properties of Lemmas \ref{Verbmulti} and \ref{variousproducts} it follows that $\xi_1$ is $d$-cohomologous to
 $\Theta\wedge \ov y$. By Lemma \ref{omegainj}, $\varphi(\Theta\wedge \ov y)=\theta\wedge\ov y$ is never $d$-exact.

\underline{$H^2_d(\psi)$ is injective:}  Since the only non-trivial differentials in $\Mm$ are $d\Theta=d\ov\Theta=\beta$, it suffices to note that if $\psi(\xi)=c[\beta]_{BC}$ then we have $[\xi]_{BC}=c[\beta]_{BC}$ and so $\xi=\del\delb g+c\beta$, which is $d$-exact in $\Bb\otimes\Lambda(\Theta,\ov\Theta)$.
\end{proof}

The above theorem gives a complex model, since we are using the complex decomposition of forms and the complex operators $\del$ and $\delb$.
However, the same proof adapts to give a real model as follows:
on the real de Rham algebra $\Aa_{\dR}(S)$, consider the real operator $d_c=\mathbf{i}(\delb-\del)$.
Then $(\Aa_{\dR}(S),d,d_c)$ is a double complex and the double complex decomposition of Section \ref{seccohom} applies as well. Bott-Chern cohomology is defined as
\[H_{BC}^k:={{\Ker d\cap\Ker d_c\cap\Aa^{k}}\over{dd_c(\Aa^{k-1})}}.\]
By Lemma \ref{Bottchernbasis} and taking a real basis $u_i=x_i+\ov x_i$ and $v_i=\mathbf{i}(x_i-\ov{x}_i)$,
the algebra $H^{\leq 1}_{BC}$  is generated by $\{u_i,v_i\}_{i=1}^k$, where $k=\dim \Omega^1$, and the only non-trivial products are
$u_i\cdot v_i=[\beta]_{BC}$ for all $i$.

The non-trivial Bott-Chern class $[\beta]_{BC}$ is real, and
and
so we may define a real dg-algebra by letting
\[\Mm_\RR:=H_{BC}^{\leq 1}\otimes\Lambda(\gamma,\gamma^c)\text{ with }d(\gamma)=0\text{ and }d(\gamma^c)=[\beta]_{BC}.\]

The above proof exchanging the roles of $\del$ and $\delb$ by $d$ and $d_c$ immediately gives a real model:

\begin{theo}\label{theomodelreal}Let $S$ be a compact non-Kähler surface with $b^1>1$. Then the differential graded algebra $(\Mm_\RR,d)$
is a 1-model for $\Aa_{\dR}(S)$.
\end{theo}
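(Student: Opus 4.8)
The plan is to transcribe the proof of Theorem~\ref{theomodel} essentially word for word, replacing the complex bicomplex $(\Aa,\del,\delb)$ by the real bicomplex $(\Aa_{\dR}(S),d,d_c)$ with $d_c=\mathbf{i}(\delb-\del)$, and complex Bott--Chern cohomology by its real counterpart $H^{k}_{BC}$. As recorded just before the statement, the real structural input is already in place: the decomposition of Section~\ref{seccohom} holds verbatim for $(\Aa_{\dR}(S),d,d_c)$, the class $[\beta]_{BC}$ is real and non-trivial, and by the real form of Lemma~\ref{Bottchernbasis} the algebra $H_{BC}^{\leq1}$ is generated by the real classes $\{u_i,v_i\}_{i=1}^k$ with the single family of relations $u_i\cdot v_i=[\beta]_{BC}$.

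First I would fix the real forms that play the roles of $\ov\theta$ and $\Theta$. The purity defect is represented by the real closed form $\gamma=\tfrac{1}{2\mathbf{i}}(\theta-\ov\theta)$, with $d\gamma=0$ and $d_c\gamma=\beta$, and I set $\gamma^c_0:=\tfrac12(\theta+\ov\theta)$, a real form with $d\gamma^c_0=\beta$ and $d_c\gamma^c_0=0$. I then build the roof
\[
\Aa_{\dR}(S)\xleftarrow{\ \varphi\ }\Bb_\RR\otimes\Lambda(\Gamma^c)\xrightarrow{\ \psi\ }\Mm_\RR,
\]
where $\Gamma^c$ is a formal generator with $d\Gamma^c=\beta$ and $d_c\Gamma^c=0$, and $\Bb_\RR\subseteq\Aa_{\dR}(S)$ is the real subalgebra generated by
\[
\Omega^0_\RR\oplus\langle u_i,v_i\rangle\oplus\langle\gamma\rangle\oplus d_c\Dd_\RR\oplus dd_c\Dd_\RR,\qquad \Dd_\RR:=\{f: dd_cf\wedge u_i=dd_cf\wedge v_i=0\ \forall i\}.
\]
As in the complex case, $\Bb_\RR$ is $d$-stable and contains $\beta$. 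The map $\varphi$ is the inclusion together with $\Gamma^c\mapsto\gamma^c_0$, and $\psi$ sends the Bott--Chern generators to their classes, $\gamma\mapsto\gamma$, $\Gamma^c\mapsto\gamma^c$, and $d_c\Dd_\RR\oplus dd_c\Dd_\RR$ to zero. Both are compatible with $d$ because $d\gamma^c_0=\beta$ and $d\gamma^c=[\beta]_{BC}$.

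The verifications are the real transcriptions of the four in the proof of Theorem~\ref{theomodel}. Multiplicativity of $\psi$ uses the real forms of Lemmas~\ref{Verbmulti} and \ref{variousproducts}, which follow from the complex statements by taking real and imaginary parts of $x,y\in\Omega^1$; the only points to record are that products of the $u_i,v_i$ are real multiples of $\beta$ modulo $dd_c$-exact forms, and that $dd_c=2\mathbf{i}\,\del\delb$, whence $\Dd_\RR\otimes\CC=\Dd$. For degree one, the $d$-closed elements are spanned by $u_i,v_i,\gamma$ (since $d(d_cf)+e\beta=0$ forces $e=0$ and $f$ constant), and these map to a basis of $H^1_d(S)$ under $\varphi$ and to the closed generators of $\Mm_\RR$ under $\psi$ (note $\gamma^c$ is not closed), so $H^1(\varphi)$ and $H^1(\psi)$ are isomorphisms. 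For $H^2$-injectivity one runs the complex argument with $\del,\delb$ replaced by $d,d_c$: since $\gamma$ is now the only generator carrying a non-zero $d_c$, a $d$-closed class is forced to be $d_c$-closed, reduces to a term $\Gamma^c\wedge w$ with $w$ in the span of the $u_i,v_i$, and the non-triviality of $\varphi(\Gamma^c\wedge w)=\gamma^c_0\wedge w$ follows from the real form of Lemma~\ref{omegainj} (the products $u_i\wedge\gamma$ and $u_i\wedge\gamma^c_0$ are non-zero).

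The main obstacle, such as it is, is bookkeeping rather than a new idea: one must check that the real analogues of the multiplicative lemmas and of the double-complex decomposition are the faithful real shadows of the complex ones, and that the chirality exploited in the degree-two step (only $\gamma$ carries a non-zero $d_c$) is correctly arranged by the choice above. Conceptually nothing further is needed, because being a $1$-quasi-isomorphism is detected after extending scalars to $\CC$: cohomology commutes with $-\otimes_\RR\CC$, and injectivity or bijectivity of a real linear map is equivalent to that of its complexification. Hence the cohomological content of the four checks coincides with the computations already carried out over $\CC$, and the passage from $\del,\delb$ to $d,d_c$ indeed yields a real $1$-model with no additional work.
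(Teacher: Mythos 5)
Your proposal is correct and follows essentially the same route as the paper, whose own proof of Theorem \ref{theomodelreal} consists precisely of the remark that the argument for Theorem \ref{theomodel} goes through verbatim with $\del,\delb$ replaced by $d,d_c$ and with the real generators $u_i,v_i,\gamma$; your explicit choices (in particular $\gamma^c_0=\tfrac12(\theta+\ov\theta)$ with $d\gamma^c_0=\beta$, $d_c\gamma^c_0=0$, and the real subalgebra $\Bb_\RR$) are exactly the intended real shadows of $\ov\theta$, $\Theta$ and $\Bb$. The closing observation that 1-quasi-isomorphisms can be detected after extension of scalars to $\CC$ is a sensible extra safeguard, though not needed once the four checks are transcribed.
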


\begin{rema}
Note that $H_{BC}^{\leq 1}$ is just a model for a compact curve of genus $k=\dim\,\Omega^1$, and so $\Mm_\RR$ agrees with a model of a torus fibration over such a curve. This is consistent with the fact that minimal non-Kähler surfaces with $b^1>1$ are elliptic \cite{Belgun}.
\end{rema}

\section{Weights in homotopy theory}\label{secweights}
In this section we review how the theory of weight decompositions gives non-trivial consequences on the homotopy type of a differential graded algebra. This will later be applied to the 1-model constructed in the previous section.

\begin{defi}
 A \textit{weight decomposition} of a commutative dg-algebra $(A,d)$ is a direct sum decomposition $A=\bigoplus_{p\in\ZZ} A_p$ such that 
 \[d:A_p^n\to A_p^{n+1}\text{ and }A^n_p\times A^m_q\to A^{n+m}_{p+q}.\]
\end{defi}
Note that a weight decomposition defines a bigrading on cohomology:
\[H^n_p(A):=H^n(A_p).\]
 A weight decomposition is said to be \textit{pure} if $H^n_p(A)=0$ for all $p\neq n$. 
The following is well-known (see for instance Proposition 3.3 of \cite{Thom}):
\begin{prop}\label{formalpure}
 If a (commutative) dg-algebra admits a pure weight decomposition, then it is formal. In particular, all higher Massey products vanish.
\end{prop}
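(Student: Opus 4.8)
The plan is to prove formality by transferring the multiplicative structure of $(A,d)$ to its cohomology along a \emph{weight-preserving} homotopy retraction, and then showing that the weight grading forces all higher transferred operations to vanish. The underlying principle is that the purity condition pins every cohomology class of degree $n$ to weight $n$, while the transferred operations preserve weight but shift degree, so a simple weight count eliminates everything but the cup product.

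First I would produce the retraction data. For each weight $p$ the subspace $A_p=\bigoplus_n A_p^n$ is a subcomplex, since $d$ preserves weights, and by purity its cohomology $H^n(A_p)=H^n_p(A)$ is concentrated in the single degree $n=p$. Working over a field of characteristic zero such as $\RR$ or $\CC$, each $A_p$ is therefore chain homotopy equivalent to $H^*(A_p)$, so I can choose a projection $\pi_p$, an inclusion $i_p$ of chosen cocycle representatives, and a homotopy $h_p$ of degree $-1$ satisfying $\mathrm{id}-i_p\pi_p=dh_p+h_pd$. Assembling over $p$ yields maps $i,\pi,h$ between $(A,d)$ and $(H^*(A),0)$ that are \emph{weight-preserving} by construction: $h$ has cohomological degree $-1$ but keeps the weight, and $i,\pi$ preserve both degree and weight.

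Next I would invoke the homotopy transfer theorem to endow $H^*(A)$ with a $C_\infty$- (homotopy-commutative $A_\infty$-) structure $\{m_k\}_{k\ge 2}$, with $m_1=0$ and $m_2$ the induced cup product, that is $C_\infty$-quasi-isomorphic to $(A,d)$. Each $m_k$ is a sum over rooted trees of composites of the multiplication of $A$ and the homotopy $h$, pre- and post-composed with $i$ and $\pi$; since every constituent preserves the weight grading, so does $m_k$. The key weight count is then immediate: $m_k$ has cohomological degree $2-k$, so on homogeneous classes of degrees $n_1,\dots,n_k$ it lands in $H^{N+2-k}(A)$ with $N=\sum_i n_i$. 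By purity the inputs have weight equal to degree, giving total input weight $N$, whereas the output, sitting in $H^{N+2-k}(A)$, has weight $N+2-k$. As $m_k$ preserves weight, this forces $N=N+2-k$, i.e. $k=2$. Hence $m_k=0$ for all $k\ge 3$, the transferred structure is the strict cup-product algebra, and $A$ is formal. The vanishing of all Massey products follows, either as the standard consequence of formality or directly, since the higher operations $m_k$ that would obstruct formality are precisely what detects the higher Massey products.

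The step I expect to be the main technical obstacle is the first stage made precise: setting up the transfer so that it remains within the commutative world, producing a genuine $C_\infty$-structure and hence \emph{cdga}-formality rather than merely associative formality, while keeping all transfer data strictly weight-homogeneous. Once this is arranged the weight-counting that annihilates the higher operations is essentially automatic. As an alternative route that sidesteps the $C_\infty$-bookkeeping, one can use the weight grading to define, for each scalar $\lambda$, a cdga automorphism $\psi_\lambda$ acting by $\lambda^p$ on $A_p$; purity makes $\psi_\lambda$ act on $H^n(A)$ by $\lambda^n$, and Sullivan's criterion—that a dga carrying an automorphism acting as $\lambda^{\deg}$ with $\lambda$ not a root of unity is formal—then applies.
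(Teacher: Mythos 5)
Your argument is correct, but note that the paper does not prove this proposition itself: it quotes it from the literature (Proposition 3.3 of \cite{Thom}), where the standard proof is an explicit zig-zag rather than homotopy transfer. Concretely, one takes the sub-cdga $P\subset A$ with $P^n=(A^n_n\cap\Ker d)\oplus\bigoplus_{p>n}A^n_p$; one checks directly that $P$ is closed under $d$ and under products, and purity makes both the inclusion $P\hookrightarrow A$ and the projection $P\to H^*(A)$ (kill the weights $p>n$, send a cocycle of weight $n$ to its class) quasi-isomorphisms of cdgas, so $A\hookleftarrow P\to H^*(A)$ exhibits formality with no $\infty$-machinery. Your transfer argument is a genuinely different route that buys slightly more (the entire transferred $C_\infty$-structure on $H^*(A)$ is trivial, not merely the existence of some zig-zag) at the cost of the $C_\infty$-version of homotopy transfer, which you correctly identify as the only delicate point; over $\RR$ this is standard (symmetric, weight-homogeneous transfer data for a cdga yield a $C_\infty$-structure, and in characteristic zero associative formality of a cdga implies commutative formality anyway), and your weight count $N=N+2-k$ forcing $m_k=0$ for $k\ge 3$ is exactly right, since $i$, $\pi$, $h$ and the product can all be chosen weight-homogeneous because each $A_p$ is a direct summand subcomplex. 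The one caveat concerns your proposed shortcut via the grading automorphism $\psi_\lambda$ and Sullivan's criterion: that criterion is normally stated for connected cdgas of finite type (one must lift the automorphism to a minimal model and diagonalize it there), so as a proof of the proposition for an arbitrary commutative dg-algebra it would require extra hypotheses that neither the transfer argument nor the zig-zag above needs.
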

We will consider weaker properties of weight decompositions that, while do not imply formality, still imply the vanishing of certain Massey products.
Let us first recall the definition of Massey products.

\begin{defi}
Let $A$ be a dg-algebra and $x_1,\cdots,x_k\in H^*(A)$ cohomology classes, with $k\geq 3$. A \textit{defining system} for $\{x_1,x_2,\cdots,x_k\}$ is a collection of elements $\{x_{i,j}\}$, for $1\leq i\leq j\leq k$ with $(i,j)\neq (1,k)$
where $x_i=[x_{i,i}]$ and 
\[d(x_{i,j})=\sum_{q=i}^{j-1} (-1)^{|x_{i,q}|} x_{i,q} x_{q+1,j}.\]
Consider the cocycle 
\[\gamma(x_{i,j}):=\sum_{q=1}^{k-1}(-1)^{|x_{1,q}|} x_{1,q} x_{q+1,k}.\]
The \textit{$k$-tuple Massey product} $\langle x_1,\cdots,x_k\rangle$ is defined to be the set of all cohomology classes
$[\gamma(x_{i,j})]$, for all possible defining systems. 
A Massey product is said to be \textit{trivial} if the trivial cohomology class belongs to its defining set.
\end{defi}

\begin{rema}
Note that the triple Massey product $\langle x_1,x_2,x_3\rangle$ is empty unless
$x_1x_2=0$ and $x_2x_3=0$. For $k>3$ one similarly asks that some $q$-tuple Massey products, with $q<k$,
are trivial in a certain compatible way, so that at least one defining system exists.
\end{rema}

The following result is tailored to our situation with compact non-Kähler surfaces:

\begin{lemm}
 Let $A$ be a dg-algebra with a weight decomposition 
 $A=\bigoplus A_p$. Assume that the induced weights in cohomology satisfy 
 \[H^1(A)=H^1_1\oplus H^1_2\text{ and }H^2(A)=H^2_2\oplus H^2_3.\]
 Then 
 there are no $k$-tuple Massey products from $H^1(A)\to H^2(A)$, for $k>3$.
\end{lemm}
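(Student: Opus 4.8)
The plan is to track weights through any would-be defining system and show that the resulting Massey cocycle is forced, by weight reasons, to represent a trivial class. Let me think about what weights the classes involved must carry.

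We have a $k$-tuple Massey product $\langle x_1,\dots,x_k\rangle$ with each $x_i\in H^1(A)$ and the output landing in $H^2(A)$. So each $x_i=[x_{i,i}]$ is a degree-one cohomology class, hence by hypothesis lives in $H^1_1\oplus H^1_2$. The key observation is how weights behave along a defining system. The entries $x_{i,j}$ for $i<j$ are degree-one elements (cochains) whose differentials are sums of products $x_{i,q}x_{q+1,j}$; since products add weights and the differential preserves weights, consistency forces each $x_{i,j}$ to carry a well-defined total weight equal to the sum of the weights of the "diagonal" generators $x_{i,i},\dots,x_{j,j}$ it interpolates. Concretely, if I write $w_i$ for the weight of the class $x_i$, then a product spanning indices $i$ through $j$ contributes weight $w_i+w_{i+1}+\cdots+w_j$. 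The final Massey cocycle $\gamma(x_{i,j})$ is a degree-two element of total weight $w_1+\cdots+w_k$.

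First I would make precise the claim that one may assume the defining system is homogeneous with respect to weight. Since the differential and products respect the decomposition, given any defining system I can project each $x_{i,j}$ onto the relevant weight component, and this projected collection is again a defining system producing a cohomology class differing from the original only by components in other weights — but the Massey product output lives in $H^2$, which only occupies weights $2$ and $3$, so the only possibly-nonzero contribution comes from the weight component landing in $\{2,3\}$. Thus without loss of generality the output $[\gamma]$ has weight $w_1+\cdots+w_k$. Now here is the arithmetic heart: each $w_i\in\{1,2\}$, so $w_1+\cdots+w_k\geq k$. For a $k$-tuple Massey product with $k>3$, that is $k\geq 4$, we get total weight $\geq 4$. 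But $H^2(A)=H^2_2\oplus H^2_3$ is concentrated in weights $2$ and $3$ only. Hence the weight-homogeneous representative of the Massey product output must lie in $H^2_w$ with $w\geq 4$, which is zero. Therefore $[\gamma]=0$, i.e. the Massey product contains the trivial class, and by definition such higher Massey products vanish.

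The step I expect to be the main obstacle is the weight-homogeneity reduction — justifying rigorously that projecting a defining system onto its homogeneous weight components again yields a valid defining system, and that the relevant output component is exactly the one computing the Massey product in the graded pieces $H^2_2$ and $H^2_3$. The subtlety is that a given $x_{i,j}$ might a priori have several weight components, and I must check that the defining-system equations $d(x_{i,j})=\sum_q (-1)^{|x_{i,q}|}x_{i,q}x_{q+1,j}$ split compatibly across weights; this follows because $d$ is weight-preserving and the right-hand side is a sum of products whose weights are determined by the (inductively homogeneous) factors, so one can peel off weight components one index-length at a time. Once that bookkeeping is in place, the degree-plus-weight count $|\gamma|=2$ together with weight $\geq k\geq 4$ lands outside the support of $H^2(A)$, and the conclusion is immediate. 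I would also remark that the hypothesis is exactly what the $1$-model $\Mm_\RR$ of the previous section satisfies, with the Bott–Chern generators in weight $1$ and $\gamma,\gamma^c$ in weight $2$, which is why this lemma applies to compact non-Kähler surfaces.
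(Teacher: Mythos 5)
Your argument is correct and is essentially the paper's own proof: one tracks weights through the defining system, finds that the Massey cocycle $\gamma(x_{i,j})$ must have weight $w=\sum_{i=1}^k w_i\geq k>3$, and concludes since $H^2_p(A)=0$ for $p>3$. The only difference is that you spell out the reduction to weight-homogeneous defining systems, a bookkeeping step the paper's proof takes for granted.
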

\begin{proof}Let $k>3$.
Assume that $x_i\in H^1_{w_i}(A)$, for $i=1,\cdots,k$.
We will show that $\langle x_1,\cdots,x_k\rangle$ is trivial. 
A defining system $\{x_{i,j}\}$ is given by elements of degree $1$ and weights $w(x_{i,j})=\sum_{q=i}^j w_i$. The above expression for $\gamma(x_{i,j})$ forces its weight to be
$w=\sum_{i=1}^k w_i$.
Therefore the $k$-tuple Massey product $\langle x_1,\cdots,x_k\rangle$ lives in
$H^{2}_w(A)$.
Now, since $w_i\in\{1,2\}$, we have $w\geq k>3$. But $H^2_p(A)=0$ for $p>3$.
\end{proof}

\begin{rema}
One may similarly prove other versions of the above lemma. In particular, in the case of smooth complex varieties, Morgan's theory of mixed Hodge structures on rational homotopy types gives the vanishing of Massey products of length $>4$ as, in that case, the weights in $H^k$ range from $k$ to $2k$.
\end{rema}

In our set-up, we are dealing with 1-models (rather than full models, given by quasi-isomorphic algebras). However, 
naturality is still valid for low-degree Massey products:
\begin{lemm}
 Let $f:A\to B$ be a 1-quasi-isomorphism and $x_1,\cdots,x_k$ cohomology classes in $H^1(A)$. Then
 \[f^*\langle x_1,\cdots,x_k\rangle=\langle f^* x_1,\cdots,f^* x_k\rangle.\]
\end{lemm}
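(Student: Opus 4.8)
The plan is to establish the two inclusions separately: the inclusion $f^*\langle x_1,\dots,x_k\rangle\subseteq \langle f^*x_1,\dots,f^*x_k\rangle$ is formal and holds for any morphism of dg-algebras, while the reverse inclusion is where the $1$-quasi-isomorphism hypothesis does the work. For the easy direction I would take a defining system $\{x_{i,j}\}$ for $\{x_1,\dots,x_k\}$ in $A$, all of whose entries sit in degree $1$ since the $x_i$ do. Because $f$ is multiplicative and compatible with differentials, the collection $\{f(x_{i,j})\}$ satisfies $[f(x_{i,i})]=f^*x_i$ and $d f(x_{i,j})=\sum_q (-1)^{|x_{i,q}|}f(x_{i,q})f(x_{q+1,j})$, so it is a defining system for $\{f^*x_1,\dots,f^*x_k\}$ in $B$. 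As $f(\gamma(x_{i,j}))=\gamma(f(x_{i,j}))$, we get $f^*[\gamma(x_{i,j})]=[\gamma(f(x_{i,j}))]\in\langle f^*x_1,\dots,f^*x_k\rangle$, using nothing about $f$ beyond being a dg-morphism.

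For the reverse inclusion I would lift an arbitrary defining system from $B$ to $A$. Fixing a defining system $\{y_{i,j}\}$ in $B$ representing an arbitrary value $[\gamma(y_{i,j})]\in\langle f^*x_1,\dots,f^*x_k\rangle$, I would construct, by induction on $j-i$, a defining system $\{x_{i,j}\}$ in $A$ for $\{x_1,\dots,x_k\}$ together with degree-zero elements $s_{i,j}\in B^0$ realising a homotopy between the two defining systems, that is, satisfying a relation of the shape
\[ f(x_{i,j})=y_{i,j}+d s_{i,j}+\sum_{i\le q<j}(-1)^{|y_{i,q}|}\bigl(s_{i,q}\,y_{q+1,j}-f(x_{i,q})\,s_{q+1,j}\bigr). \]
The base case $j=i$ only requires choosing a cocycle $x_{i,i}\in A^1$ with $[x_{i,i}]=x_i$, which is possible since $H^1(f)$ is an isomorphism, and observing that $f(x_{i,i})$ and $y_{i,i}$ are cohomologous, hence differ by some $d s_{i,i}$.

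The inductive step is where both parts of the $1$-quasi-isomorphism condition enter. Assuming the data built for all shorter entries, the element $\zeta_{i,j}:=\sum_q(-1)^{|x_{i,q}|}x_{i,q}x_{q+1,j}\in A^2$ is a cocycle, and a direct computation using the homotopy relations shows that $f(\zeta_{i,j})$ is a coboundary in $B$. Since $H^2(f)$ is injective, $[\zeta_{i,j}]=0$ in $H^2(A)$, so $\zeta_{i,j}=d x_{i,j}^{0}$ for some $x_{i,j}^{0}\in A^1$, giving a candidate next entry with the correct differential. To enforce the homotopy relation I would correct $x_{i,j}^{0}$ by a cocycle: the obstruction $f(x_{i,j}^{0})-R_{i,j}$, where $R_{i,j}$ is the right-hand side of the displayed relation without the $d s_{i,j}$ term, is a $1$-cocycle in $B$, and surjectivity of $H^1(f)$ lets me subtract $f(c)$ for a suitable cocycle $c\in A^1$ so that the difference becomes exact, equal to some $d s_{i,j}$. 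Replacing $x_{i,j}^{0}$ by $x_{i,j}^{0}-c$ preserves $d x_{i,j}=\zeta_{i,j}$ and establishes the relation for $(i,j)$.

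Finally, at the top corner $(1,k)$ the same telescoping computation gives $f(\gamma(x_{i,j}))=\gamma(y_{i,j})+d w$ for some $w\in B^1$, so $f^*[\gamma(x_{i,j})]=[\gamma(y_{i,j})]$; since $[\gamma(x_{i,j})]\in\langle x_1,\dots,x_k\rangle$, this exhibits the chosen value of $\langle f^*x_1,\dots,f^*x_k\rangle$ inside $f^*\langle x_1,\dots,x_k\rangle$, and together with the first inclusion yields equality. I expect the main obstacle to be bookkeeping rather than conceptual: verifying that the homotopy relations are self-consistent, that $f(\zeta_{i,j})$ and the top-level difference $f(\gamma)-\gamma$ genuinely telescope into coboundaries, and keeping track of signs throughout. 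The conceptual point worth emphasising is that, because the $x_i$ lie in $H^1$, every entry of every defining system lives in degree $1$ and every obstruction in degree $2$; consequently only the bijectivity of $H^1(f)$ and the injectivity of $H^2(f)$ are ever invoked, which is precisely the content of $f$ being a $1$-quasi-isomorphism.
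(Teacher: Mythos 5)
Your proposal is correct and is essentially the argument the paper has in mind: the paper simply cites Kraines for naturality of Massey products under quasi-isomorphisms and remarks that the same proof works for $1$-quasi-isomorphisms when the inputs lie in $H^1$, which is exactly the content of your observation that every entry of a defining system then sits in degree $1$ and every obstruction in degree $2$, so only bijectivity of $H^1(f)$ and injectivity of $H^2(f)$ are ever used. Your two-inclusion structure (pushing defining systems forward for the easy containment, lifting them inductively with homotopies $s_{i,j}$ for the reverse one) is the standard proof being referenced, so modulo the sign bookkeeping you acknowledge, nothing is missing.
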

\begin{proof}
This is proved in \cite{Kraines} for Massey products in any degree. For the case of 1-quasi-isomorphisms, the same proof remains valid if we restrict to degree 1 cohomology classes.
\end{proof}

We now turn to the real Malcev completion of the fundamental group of a topological space $X$.
Form the lower central series for $\pi_1(X)$:
\[\cdots\subset \Gamma_3\subset \Gamma_2\subset\pi_1(X)\]
where $\Gamma_2=[\pi_1(X),\pi_1(X)]$ and $\Gamma_{i+1}=[\pi_1(X),\Gamma_i]$. Taking quotients we get the Malcev completion of $\pi_1(X)$. It is given by the tower of nilpotent groups
\[\cdots\to \pi_1(X)/\Gamma_3\to\pi_1(X)/\Gamma_2\to \{1\}.\]
Each $\pi_1(X)/\Gamma_i$ is a nilpotent group of index $i$, and is a central extension of $\pi_1(X)/\Gamma_{i-1}$ by the abelian group $\Gamma_{i+1}/\Gamma_i$. For a field $\kk$, the $\kk$-Malcev completion of $\pi_1(X)$ is given by tensoring the above tower by $\kk$
(see \cite{Sullivan}).

Assume now that $X$ is a smooth manifold, and let $\mathcal{M}$ be a 1-minimal model of $\mathcal{A}_{\mathrm{dR}}(X)$.
By definition, the minimal 1-model may be written as a colimit of free commutative dg-algebras over $\RR$
\[\RR\to \Mm_1\to \Mm_2\to\cdots,\]
each $\Mm_i$ generated in degree $1$, and such that $d\Mm_i\subset \Mm_{i-1}$.
Dualizing the sets of generators we get a tower of $\RR$-Lie algebras, the \textit{real Malcev Lie algebra}
\[\cdots\to \Ll_2\to\Ll_1\to 0\]
and each $\Ll_{i+1}$ is a central extension of $\Ll_i$. A main result due to Sullivan \cite{Sullivan} states that this tower of Lie algebras is the tower of Lie algebras associated to the $\RR$-Malcev completion of $\pi_1(X)$.
The assumption that $X$ is smooth is only needed to make use of its de~Rham algebra. Alternatively, we could proceed using Sullivan's algebra of piecewise linear forms $\mathcal{A}_{\mathrm{PL}}(X)$, which is defined for any topological space. In this case, it is also possible to work over $\mathbb{Q}$ instead of $\mathbb{R}$.

\begin{prop}Assume that $\Aa_{\dR}(X)$ admits a 1-model $\Mm$ together with a weight decomposition.
\begin{enumerate}
 \item If the weight decomposition is pure, then the real Malcev completion of $\pi_1(X)$ is determined
 by the cup product $H^1(X)\times H^1(X)\to H^2(X)$ and its Malcev Lie algebra is quadratically presented.
 \item If the induced weights in cohomology satisfy
 $H^1(X)\cong H^1_1\oplus H^1_2$ and $H^2(X)\cong H^2_2\oplus H^2_3$
 then the real Malcev completion of $\pi_1(X)$ is determined by the bigraded nilpotent Lie algebra of $(\pi_1(X)/\Gamma_4)\otimes\RR$.
\end{enumerate}
\end{prop}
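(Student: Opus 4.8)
The plan is to promote the given weight decomposition from the 1-model $\Mm$ up to the 1-minimal model, transport it to the dual Lie algebra, and then let the additivity of weights under the bracket force every relation into low bracket-length. Since $\Mm$ is a commutative dg-algebra internal to weight-graded vector spaces, its 1-minimal model $\Mm_{\min}=\Lambda V$ (with $V$ in degree $1$ and $dV\subset\Lambda^2V$) can be built weight-compatibly, exactly as in Morgan's bigraded minimal model construction \cite{Morgan}: one kills cohomology and adjoins generators weight by weight, and the connecting 1-quasi-isomorphisms preserve weights. Dualizing the tower of generators as in the paragraph preceding the statement and in \cite{Sullivan}, the real Malcev Lie algebra $L=\varprojlim\Ll_i$ then inherits a weight grading $L=\prod_{p\ge 1}L^{(p)}$ with $[L^{(p)},L^{(q)}]\subset L^{(p+q)}$, compatible with the lower central series.

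Next I would locate generators and relations of $L$ by weight. The abelianization $L/[L,L]\cong H_1(L)$ is dual to $H^1(X)$, so the topological generators of $L$ sit in weights $1$ and $2$; the minimal relations form $H_2(L)\cong H^2(\Mm_{\min})^{*}$, and since $\Mm_{\min}$ is a 1-minimal model the weight-graded injection $H^2(\Mm_{\min})\hookrightarrow H^2(X)$ shows the relations carry weights among those of $H^2(X)$, namely $2$ and $3$. The decisive bookkeeping is that each generator has weight $\ge 1$, so any weight-homogeneous element of bracket-length $\ell$ has weight $\ge \ell$; hence a relation of weight $\le 3$ has bracket-length $\le 3$, and (lying in $[L,L]$) at least $2$. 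Thus $L$ is presented by generators of bracket-length $1$ and relations of bracket-lengths $2$ and $3$ only, i.e. quadratic and cubic. For assertion (1) the same bookkeeping with purity gives $H^1=H^1_1$ and $H^2=H^2_2$ (and formality by Proposition \ref{formalpure}), so generators sit in weight $1$ and relations in weight $2$, forcing every relation to be quadratic; the relation space $H_2(L)\cong H^2(\Mm_{\min})^{*}$ is then governed by the cup product $H^1\wedge H^1\to H^2$, whence the Malcev completion is determined by that product and $L$ is quadratically presented.

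Finally, for (2), I would reconstruct $L$ from its truncation. Writing $L$ as a completed free Lie algebra on $(H^1)^{*}$ modulo the closed ideal generated by its relations, the fact that all relations have bracket-length $\le 3$ means they are already visible in $L/\Gamma_4 L$, where the weight bigrading cleanly separates the genuine relations (weight $\le 3$) from the length-$4$ artifacts that the truncation $\Gamma_4$ introduces (weight $\ge 4$). Hence the generators and all defining relations of $L$ are recovered from the bigraded nilpotent Lie algebra $L/\Gamma_4 L\cong(\pi_1(X)/\Gamma_4)\otimes\RR$, and $L$ is the free Lie algebra on these generators modulo these relations, which is assertion (2). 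I expect the main obstacle to be the first step: making the weight decomposition descend to the minimal model and, in tandem, securing the clean weight-graded identification $H_2(L)\cong H^2(\Mm_{\min})^{*}$, together with the pro-nilpotent completion and finiteness subtleties guaranteeing that each weight component is finite-dimensional so that the duality and the bigraded reconstruction are valid.
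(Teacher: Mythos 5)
Your proposal is correct and follows essentially the same route as the paper: transfer the weight decomposition to the 1-minimal model (the paper cites Lemma~3.2 of \cite{Thom} for exactly this step), dualize to a weight-graded Malcev Lie algebra, and use the additivity of weights to place generators in weights $1,2$ and relations in weights $2,3$, so that the truncation at $\Gamma_4$ recovers the full tower. The paper's proof is just a terser version of this bookkeeping, deferring the reconstruction details to \cite{Morgan}.
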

\begin{proof}
First, note that weight decompositions carry on to minimal models so we may assume that such 1-model is minimal (see for instance Lemma 3.2 of \cite{Thom}).
In the pure case, the minimal 1-model is formal by Proposition \ref{formalpure} and hence may be computed from the cup product $H^1(X)\times H^1(X)\to H^2(X)$. The second statement follows from the fact that
the associated graded Lie algebra has generators with weights -1 and -2 (corresponding to the negative values of the weights in $H^1$) and relations of weights -2 and -3 (corresponding to the negative values of the weights in $H^2$). Consequently, the graded Lie algebra modulo its fourth order commutators reconstructs the full tower. We refer to \cite{Morgan} for more details.
\end{proof}

\section{Main Theorem}

\begin{theo}
Let \(S\) be a compact complex non-Kähler surface. Then:
\begin{enumerate}
    \item There are no Massey products from \(H^1(S)\) to \(H^2(S)\) of length greater than three.
    \item The real Malcev completion of \(\pi_1(S)\) is determined by the bigraded nilpotent Lie algebra
    \[\bigl(\pi_1(S)/\Gamma_4 \bigr) \otimes \mathbb{R},\text{ where } \Gamma_2 = [\pi_1(S), \pi_1(S)]\text{ and }\Gamma_{i+1} = [\pi_1(S), \Gamma_i]\] is the lower central series.
    \item
    The real Malcev Lie algebra
    admits a presentation by generators $\{X_i,Y_i\}_{i=1}^k$ and $Z$, subject to the relations
 \[[Z,X_i]=0\quad,\quad[Z,Y_i]=0\quad,\quad \sum_{j=1}^k [[X_j,Y_j],X_i]=0\quad\text{ and }\quad\sum_{j=1}^k [[X_j,Y_j],Y_i]=0,\]
 for all $1\leq i\leq k$, where $k={1\over 2}(b^1-1)$.
\end{enumerate}
\end{theo}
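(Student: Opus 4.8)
The plan is to deduce all three statements from the explicit real $1$-model $(\Mm_\RR,d)$ of Theorem \ref{theomodelreal}, endowed with the weight decomposition in which the Bott–Chern generators $u_i,v_i$ have weight $1$ and $\gamma,\gamma^c$ have weight $2$. First I would check this is a genuine weight decomposition: both $d$ and the product preserve weights, since $d\gamma^c=[\beta]_{BC}$ has weight $2$ and $u_i\cdot v_i=[\beta]_{BC}$ has weight $1+1=2$. A direct computation of the cohomology of $\Mm_\RR$ then gives $H^1=H^1_1\oplus H^1_2$ with $H^1_1=\langle u_i,v_i\rangle$ and $H^1_2=\langle\gamma\rangle$, and $H^2=H^2_3$ concentrated in weight $3$ and of dimension $4k$; indeed $[\beta]_{BC}=d\gamma^c$ is exact, so the only weight-$2$ class dies, and $\gamma\gamma^c$ is not closed. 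With these weights, statement (1) is immediate from the Massey-vanishing lemma together with the naturality of low-degree Massey products under $1$-quasi-isomorphisms, and statement (2) is the second case of the Proposition on Malcev completions.

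For (3) I would follow Horel's strategy and read off the presentation of the real Malcev Lie algebra $L$ by dualizing the $1$-minimal model of $\Mm_\RR$. Its generators are dual to $H^1$: write $X_i,Y_i$ for the duals of $u_i,v_i$ and $Z$ for the dual of $\gamma$, giving $2k+1=b^1$ generators. It is useful to recall that, by Lemma \ref{Bottchernbasis}, $H_{BC}^{\leq 1}$ is exactly the cohomology ring of a genus-$k$ curve $\Sigma_k$, with $u_i\cdot v_i=[\beta]_{BC}$ playing the role of the fundamental class; thus $\Mm_\RR$ is the model of a torus fibration over $\Sigma_k$, and one expects $L$ to be a central modification of the surface-group Lie algebra of $\Sigma_k$, in which the classical relation $\sum_j[X_j,Y_j]=0$ is relaxed to centrality.

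The relations come in two layers. The quadratic relations are dual to the cup product $H^1\wedge H^1\to H^2$, which I would compute in $\Mm_\RR$: the only non-vanishing products are $u_i\cup\gamma=[u_i\gamma]$ and $v_i\cup\gamma=[v_i\gamma]$, whereas $u_i\cup v_i=[\beta]_{BC}=0$ in cohomology because $[\beta]_{BC}$ is exact. Dualizing yields precisely $[Z,X_i]=[Z,Y_i]=0$, so $Z$ is central, while the brackets $[X_i,Y_i]$ survive. The remaining $2k$ classes $[u_i\gamma^c]$ and $[v_i\gamma^c]$ are indecomposable and are detected by triple Massey products: since $u_jv_j=d\gamma^c$ for every $j$, the element $\gamma^c$ furnishes a defining system exhibiting $[u_i\gamma^c]\in\langle u_j,v_j,u_i\rangle$ independently of $j$; at the level of the minimal model this says that the stage-two generators $c_{jj}$ dual to $[X_j,Y_j]$ satisfy $\xi_i^X\wedge c_{jj}\mapsto u_i\gamma^c$ for all $j$. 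Dualizing these secondary classes produces the cubic relations $\sum_j[[X_j,Y_j],X_i]=0$ and $\sum_j[[X_j,Y_j],Y_i]=0$, i.e. $W:=\sum_j[X_j,Y_j]$ is central; the relation $[W,Z]=0$ is then automatic from the Jacobi identity and the quadratic relations. The summation over $j$ is the exact manifestation of $u_jv_j$ being the single class $[\beta]_{BC}$ for all $j$.

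Finally I would verify completeness: there are $4k$ relations, matching $\dim H^2=4k$, and statement (1) guarantees that no relations of bracket-length $\geq 4$ are needed, so the presentation is exhaustive. The step I expect to be the main obstacle is the precise dualization of the cubic layer: extracting the relation $\sum_j[[X_j,Y_j],X_i]=0$ with the correct summation range and unit coefficients requires careful bookkeeping of the stage-two generators of the minimal model and of the differential dual to the bracket, together with control of Massey-product indeterminacy and signs. This is exactly where the minimal-model computation is indispensable, since a naive reading of individual triple products is misleading — for instance $\langle u_i,v_i,u_i\rangle$ carries an extra factor on the diagonal term $i=j$, whereas the cocycles $\xi_i^X\wedge c_{jj}$ contribute with uniform coefficient $1$.
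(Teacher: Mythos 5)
For parts (1) and (2) your argument is exactly the paper's: put the weight decomposition on $\Mm_\RR$ with the Bott--Chern generators in weight $1$ and $\gamma,\gamma^c$ in weight $2$, check compatibility of $d$ and the product, observe that $H^1$ sits in weights $1,2$ and $H^2$ in weight $\le 3$ (your sharper claim that $H^2(\Mm_\RR)$ is $4k$-dimensional and purely of weight $3$ is correct, since $[\beta]_{BC}=d\gamma^c$ is exact and $\gamma\gamma^c$ is not closed), and invoke the Massey-vanishing lemma, naturality under $1$-quasi-isomorphisms, and part (2) of the Proposition in Section \ref{secweights}. Nothing to add there.

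For part (3) you take a genuinely different route. The paper does not build the $1$-minimal model stage by stage: it observes that $\Mm_\RR$ is a quadratic-linear dg-algebra on $V=\langle u_i,v_i,\gamma,\gamma^c\rangle$, checks Koszulness by the rewriting method (the critical monomials $u_iv_iu_j$, $u_iv_iv_j$ are confluent), and then reads off the quadratic-linear dual Lie algebra following \cite{Bez}: generators $X_i,Y_i,Z,W$ with $W$ dual to $\gamma^c$, quadratic relations from $I^\perp$, and the linear relation $W=[X_1,Y_1]-\sum_{i\ge 2}[X_i,Y_i]$ coming from the dual of the differential; eliminating $W$ yields the stated cubic relations. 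Your route --- dualizing the cup product for the quadratic layer and detecting the cubic layer via the triple Massey products $\langle u_j,v_j,u_i\rangle\ni[\gamma^c u_i]$ --- is morally sound and correctly explains why the sum over $j$ appears (all $u_jv_j$ bound the single element $\gamma^c$). But the step you yourself flag as ``the main obstacle,'' namely extracting the relations $\sum_j[[X_j,Y_j],X_i]=0$ with the right coefficients from the stage-two generators, is precisely the content of the proof and is not carried out; you would also need to justify that the relation space of the Malcev Lie algebra is dual to $H^2$ of the $1$-\emph{minimal} model rather than of the $1$-model $\Mm_\RR$ (these a priori differ, since a $1$-quasi-isomorphism only gives an injection on $H^2$), so the count ``$4k$ relations matching $\dim H^2(\Mm_\RR)=4k$'' needs an argument. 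The Koszul-duality mechanism in the paper is what replaces this bookkeeping and makes the presentation, including completeness and coefficients, automatic.
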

\begin{proof}
 In view of Section \ref{secweights}, to prove $(1)$ and $(2)$ it suffices to show that any compact complex surface has a model $(\Mm,d)$ with a weight decomposition such that $H^1(\Mm)$ has only weights 1 and 2 and $H^2(\Mm)$ has only weights 2 and 3.
The case $b^1=1$ is trivial, so let us assume that $b^1>1$. In this case, we take the model given in Theorem \ref{theomodelreal}
\[\Mm_\RR=H^{\leq 1}_{BC}\otimes_d\Lambda(\gamma,\gamma^c),\, d\gamma=0,\,d\gamma^c=[\beta]_{BC}.\]
To endow it with a weight decomposition it suffices to declare $H_{BC}^k$ to be of weight k and $\gamma$, $\gamma^c$ to be of weight 2.
Since $[\beta]_{BC}$ has weight 2, the differential on $\Mm_\RR$ is compatible with weights.
Moreover, the product $\gamma\cdot\gamma^c$ is never closed and so the weights in cohomology satisfy the required properties.
For $(3)$, we will just compute the real Malcev completion of $\pi_1(\langle \Mm_\RR\rangle)$.

The algebra $\Mm_\RR$ is a quadratic dg-algebra with generating vector space \[V=\langle u_i,v_i\rangle_{i=1}^k\oplus \langle\gamma,\gamma^c\rangle\] and relations $R$ given by
\[u_i\cdot u_j=0,\, v_i\cdot v_j=0,\, u_i\cdot v_i=0\text{ for all } i\neq j,\, \text{ and }u_i\cdot v_i=u_1\cdot v_1\text{ for all } 2\leq i\leq k.\]
The differential on generators is $d\gamma^c=u_1\cdot v_1$ and trivial otherwise.
To check that it is Koszul, we use the rewriting method (see Theorem 4.1.1 of \cite{LoVa}).
This ensures $\Mm_\RR$ is Koszul if
 $V$ admits an ordered basis, for which there exists a suitable
order on the set of tuples, such that every critical monomial is confluent.
In our case, it suffices to observe that the only (interesting) critical monomials with respect to the above relations are the monomials of the form $u_i\cdot {v_i}\cdot u_j$ or $u_i\cdot v_i\cdot v_j$ and those are obviously confluent.
Now we can proceed
analogously to the main theorem of \cite{Bez}. We use the notation from \cite[Section 3]{Bez}.
Denote by $\{X_i,Y_i\}_{i=1}^k\sqcup\{Z,W\}$ the dual basis of $V^*$. The space $I\subset V\otimes V$ is generated by the relations above. The space $I^{\perp}\subset V^*\otimes V^*$  is generated by all elements of the form $Z\otimes W$, $Z\otimes X_i$, $Z\otimes Y_i$, $W\otimes X_i$ and $W\otimes Y_i$ as well as the element
\[X_1\otimes Y_1-\sum_{i=2}^n X_i\otimes Y_i\]
The map
\[\varphi:I^{\perp}\to V^*\]
is dual to the differential $V\to V\otimes V/I$ so it is zero on every basis element except
\[\varphi(X_1\otimes Y_1-\sum_{i=2}^n X_i\otimes Y_i)=W.\]
This gives the following presentation for the quadratic linear dual of $\Mm_\RR$. It has generators
$\{X_i,Y_i,Z,W\}$ with $1\leq i\leq n$, subject to the following relations:
\begin{enumerate}[(a)]
\item $[Z,W]=0$.
\item $[Z,X_i]=[Z,Y_i]=0$ for all $1\leq i\leq n$.
\item $[W,X_i]=[W,Y_i]=0$, for all $1\leq i\leq n$.
\item $W=[X_1,Y_1]-\sum_{i=2}^n [X_i,Y_i]$.
\end{enumerate}
Substituting $(d)$ into the previous equations, we find that $(a)$ is implied by $(b)$ together with the Jacobi identity,
while $(c)$ gives the desired cubic relations, after a change of basis $X_1\mapsto -X_1$.
\end{proof}

\bibliographystyle{alpha}
\bibliography{biblio}

\end{document}